\documentclass[11pt]{article}
\usepackage{latexsym, amssymb, amsthm, amsmath}
\usepackage{dsfont}
\usepackage{array}
\usepackage{multirow}
\usepackage{amsxtra}
\usepackage[T1]{fontenc}
\usepackage{tikz}

\textheight=23cm
\textwidth = 6.375 true in
\topmargin=-16mm
\marginparsep=0cm
\oddsidemargin=-0.7cm
\evensidemargin=-0.7cm
\headheight=13pt
\headsep=0.3cm
\parskip=0pt
\baselineskip=27pt
\hfuzz=4pt
\widowpenalty=10000

\DeclareMathAlphabet\gothic{U}{euf}{m}{n}

\setlength{\marginparwidth}{1 true in}


\makeatletter
\def\eqnarray{\stepcounter{equation}\let\@currentlabel=\theequation
\global\@eqnswtrue
\tabskip\@centering\let\\=\@eqncr
$$\halign to \displaywidth\bgroup\hfil\global\@eqcnt\z@
  $\displaystyle\tabskip\z@{##}$&\global\@eqcnt\@ne
  \hfil$\displaystyle{{}##{}}$\hfil
  &\global\@eqcnt\tw@ $\displaystyle{##}$\hfil
  \tabskip\@centering&\llap{##}\tabskip\z@\cr}

\def\endeqnarray{\@@eqncr\egroup
      \global\advance\c@equation\m@ne$$\global\@ignoretrue}

\def\@yeqncr{\@ifnextchar [{\@xeqncr}{\@xeqncr[5pt]}}
\makeatother

\allowdisplaybreaks[1]

\newtheorem{lemm}{Lemma}[section]
\newtheorem{thrm}[lemm]{Theorem}

\newtheorem{eeg}[lemm]{Example}
\newtheorem{rrema}[lemm]{Remark}
\newtheorem{prop}[lemm]{Proposition}
\newtheorem{ddefi}[lemm]{Definition}

\newenvironment{defi}{\begin{ddefi} \rm}{\end{ddefi}}

\newcommand{\gota}{\gothic{a}}

\newcounter{teller}

\newcounter{tellerr}

\newenvironment{tabeleq}{\begin{list}%
{\rm  (\roman{tellerr})\hfill}{\usecounter{tellerr} \leftmargin=1.1cm
\labelwidth=1.1cm \labelsep=0cm \parsep=0cm}
                         }{\end{list}}

\newcounter{tellerrr}

\newcounter{proofstep}

\newcommand{\Ni}{\mathds{N}}

\newcommand{\Ri}{\mathds{R}}
\newcommand{\Ci}{\mathds{C}}

\newcommand{\supp}{\mathrm{supp\,}}
\newcommand{\D}{\partial}

\hyphenation{groups}
\hyphenation{unitary}

\newlength{\hightcharacter}
\newlength{\widthcharacter}

\makeatletter
\renewcommand*\env@matrix[1][*\c@MaxMatrixCols c]{%
  \hskip -\arraycolsep
  \let\@ifnextchar\new@ifnextchar
  \array{#1}}
\makeatother

\linespread{1.25}

\begin{document}

\thispagestyle{empty}

\vspace*{1cm}
\begin{center}
{\Large\bf Lusin characterisation of Hardy spaces associated with \\ Hermite operators} \\[5mm]
\large Tan Duc Do$^1$, Trong Ngoc Nguyen$^2$ and Truong Xuan Le$^{3,*}$ \\[10mm]

\end{center}

\vspace{5mm}

\begin{center}
{\bf Abstract}
\end{center}

\begin{list}{}{\leftmargin=1.8cm \rightmargin=1.8cm \listparindent=10mm 
   \parsep=0pt}
\item
Let $d \in \{3, 4, 5, \ldots\}$ and $p \in (0,1]$.
We consider the Hermite operator $L = -\Delta + |x|^2$ on its maximal domain in $L^2(\Ri^d)$.
Let $H_L^p(\Ri^d)$ be the completion of 
$
\{ f \in L^2(\Ri^d): \mathcal{M}_L f \in L^p(\Ri^d) \}
$
with respect to the quasi-norm
$
\|\cdot\|_{H_L^p} = \|\mathcal{M}\cdot\|_{L^p}, 
$
where $\mathcal{M}_L f(\cdot) = \sup_{t > 0} |e^{-tL} f(\cdot)|$ for all $f \in L^2(\Ri^d)$.
We characterise $H_L^p(\Ri^d)$ in terms of Lusin integrals associated with Hermite operator.
\end{list}

\vspace{2.5cm}
\noindent
January 2019

\vspace{5mm}
\noindent
AMS Subject Classification: 42B30, 42B37.

\vspace{5mm}
\noindent
Keywords: Hermite operator, Hardy space, Lusin integral, atom decomposition, heat kernel, Poisson semigroup.

\vspace{10mm}

\noindent
{\bf Home institution:}    \\[3mm]
1. \quad Vietnamese-German University  \quad Email: tan.dd@vgu.edu.vn
\\
2. \quad University of Pedagogy, Ho Chi Minh City \quad Email: trongnn37@gmail.com
\\
3. \quad University of Economic, Ho Chi Minh City \quad Email: lxuantruong@gmail.com
\\
*. \quad Corresponding author

\newpage

\setcounter{page}{1}

\section{Introduction}
Hermite operators have been studied over the years due to their significant role in harmonic oscillator.
Many of their interesting properties have been discovered. 
An in-depth study of Hermite operators can be found in the monograph \cite{Tha}.
On the other hand, Hardy spaces with their rich structures are of fundamental interest in harmonic analysis (cf.\ \cite{Ste}).
In this paper we will examine Hardy spaces with index $p \in (0,1]$ associated with Hermite operators.
We aim to characterise these spaces in terms of Lusin area integrals.
Similar results are available in the literature (cf.\ \cite[Section III.4.4]{Ste}, \cite{SY}, \cite{JPY}, etc.\ and references therein).
Our approach differs these in that the Lusin area integrals of our consideration are more general and are defined using derivatives of the form $\D_j + x_j$ which were first suggested by Thangavelu in \cite{Tha2} and are specific to Hermite operators.
Our work is motivated by \cite{Jiz} whose results are for Hardy spaces associated with Hermite operators with index $p=1$.

Next we formulate our problem.
Let $d \in \Ni$ with $d \geq 3$.
Consider the sesquilinear form
\[
\gota_0(u,v) = \int_{\Ri^d} Du \cdot Dv + |x|^2 u \, v,
\]
where $D = (\D_1, \ldots, \D_d)$, on the domain $D(\gota_0) = C_c^\infty(\Ri^d)$.
Then $\gota_0$ is positive.
Consequently it follows from \cite[Theorem VI.1.27]{Kat1} that $\gota_0$ is closable.

Let $L$ be the operator associated with the closure $\overline{\gota_0}$ in the sense of Kato's First Representation Theorem \cite[Theorem VI.2.1]{Kat1}.
Then $C_c^\infty(\Ri^d) \subset D(L)$ and
\[
Lu = -\Delta u + |x|^2 \, u
\]
for all $u \in C_c^\infty(\Ri^d)$.
In the literature $L$ is known as Hermite operator.
It is well-known that $L$ generates a contraction $C_0$-semigroup $T$ on $L^2(\Ri^d)$.
If $f \in L^2(\Ri^d)$ then
\[
T_t f = e^{-tL} f = \sum_{n=0}^\infty e^{-t(2n+d)} \, f_n,
\]
where
\[
f_n = \sum_{|\alpha| = n} (f, h_\alpha) \, h_\alpha.
\]
The Poisson semigroup $P$ on $L^2(\Ri^d)$ associated with $L$ is given by
\[
P_t = e^{-tL^{1/2}} f = \sum_{n=0}^\infty e^{-t(2n+d)^{1/2}} \, f_n.
\]
It can be shown that $P$ is also a contraction $C_0$-semigroup on $L^2(\Ri^d)$.

Let $p \in (0, 1]$.
We define $H^p_L(\Ri^d)$ as the completion of 
\[
\{ f \in L^2(\Ri^d): \mathcal{M}_L f \in L^p(\Ri^d) \}
\]
under the quasi-norm
\[
\|\cdot\|_{H_L^p} = \|\mathcal{M_L}\cdot\|_{L^p}, 
\]
where $\mathcal{M}_L f(\cdot) = \sup_{t > 0} |T_t f(\cdot)|$ for all $f \in L^2(\Ri^d)$.

For each $x \in \Ri^d$, we define the Lusin integral associated with Hermite operator as
\begin{equation} \label{Sop}
Sf(x) 
:= \left( \int_{\Gamma(x)} t^{1-d} \, |\nabla_L P_t f(y)|^2 \, dy \, dt \right)^{1/2}
\end{equation}
where $\nabla_L = (\D_t, \D_1 + x_1, \ldots, \D_d + x_d)$ and
\begin{equation} \label{Gamma1}
\Gamma(x) := \{ (y,t) \in \Ri^d \times (0,\infty): |x-y| < t \}
\end{equation}
Let $H_S^p(\Ri^d)$ be the completion of
\[
\big\{f \in L^2(\Ri^d): Sf \in L^p(\Ri^d) \big\}
\]
under the quasi-norm
\[
\|\cdot\|_{H_S^p(\Ri^d)} = \|S\cdot\|_{L^p(\Ri^d)}.
\]

In this paper we aim to characterise the space $H^p_L(\Ri^d)$ in terms of the Lusin integral defined above.
Our main result is the following.

\begin{thrm} \label{main theorem}
We have
\[
\left( H^p_L(\Ri^d), \|\cdot\|_{H^p_L(\Ri^d)} \right) 
= \left( H_S^p(\Ri^d), \|\cdot\|_{H_{S}^p(\Ri^d)} \right).
\]
\end{thrm}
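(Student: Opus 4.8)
Since $H^p_L(\Ri^d)$ and $H^p_S(\Ri^d)$ are by definition completions of subspaces of $L^2(\Ri^d)$, the theorem follows once we establish the two-sided quasi-norm estimate $\|Sf\|_{L^p}\approx\|\mathcal M_L f\|_{L^p}$ for every $f\in L^2(\Ri^d)$: this identifies the two defining subspaces together with their quasi-norms, hence their completions. I would organise the argument around an operator-adapted atomic space $H^p_{L,\mathrm{at}}(\Ri^d)$ and the chain $H^p_L=H^p_{L,\mathrm{at}}=H^p_S$. The $L^2$ endpoint already exhibits the Hermite-specific feature: writing $A_j=\D_j+x_j$, the factorisation $\sum_{j=1}^d A_j^*A_j=L-d$ together with $\D_t P_t f=-L^{1/2}P_t f$ gives $\int_{\Ri^d}|\nabla_L P_t f|^2\,dy=2\|L^{1/2}P_t f\|_{L^2}^2-d\|P_t f\|_{L^2}^2$, and after the Fubini reduction $\|Sf\|_{L^2}^2=c\int_0^\infty t\int_{\Ri^d}|\nabla_L P_t f|^2\,dy\,dt$ the spectral expansion $f=\sum_n f_n$ turns each eigencomponent into the elementary integral $\int_0^\infty t\,(2\lambda_n-d)e^{-2t\sqrt{\lambda_n}}\,dt$ with $\lambda_n=2n+d$, whose coefficient $\tfrac12-\tfrac{d}{4\lambda_n}$ lies in $[\tfrac14,\tfrac12)$ because $\lambda_n\ge d$. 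Hence $\|Sf\|_{L^2}\approx\|f\|_{L^2}$, the lower-order correction being harmless.

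The $L^p$ theory rests on kernel estimates. I would record the Gaussian upper bound $|p_t(x,y)|\lesssim t^{-d/2}e^{-c|x-y|^2/t}$ for the heat kernel of $T_t=e^{-tL}$, together with the companion bounds for $\D_t p_t$ and for $A_j p_t=(\D_{x_j}+x_j)p_t$ of order $t^{-(d+1)/2}e^{-c|x-y|^2/t}$, all available for the Hermite operator. Subordinating through $P_t f=\frac{t}{2\sqrt\pi}\int_0^\infty s^{-3/2}e^{-t^2/(4s)}T_s f\,ds$ transfers these to pointwise and Davies--Gaffney off-diagonal bounds for the kernels of $\D_t P_t$ and $A_j P_t$, which are the analytic engine for both inclusions.

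For $H^p_S\subseteq H^p_L$ (the estimate $\|\mathcal M_L f\|_{L^p}\lesssim\|Sf\|_{L^p}$) I would exploit the pointwise domination $S_\D f\le Sf$, where $S_\D f$ is the conical square function built from the vertical derivative $\D_t P_t$ alone; indeed $|\nabla_L P_t f|^2\ge|\D_t P_t f|^2$. Since $t\D_t P_t f=-(tL^{1/2})e^{-tL^{1/2}}f$ is an admissible Calderón component, the function $(y,t)\mapsto t\D_t P_t f(y)$ lies in the tent space $T^p$ with norm $\approx\|S_\D f\|_{L^p}\le\|Sf\|_{L^p}$; applying the reproducing formula $f=4\int_0^\infty (tL^{1/2}e^{-tL^{1/2}})^2 f\,\frac{dt}{t}$ to the Coifman--Meyer--Stein atomic decomposition of this tent function reconstructs $f$ as a sum of $H^p_{L,\mathrm{at}}$-molecules, giving $\|\mathcal M_L f\|_{L^p}\approx\|f\|_{H^p_{L,\mathrm{at}}}\lesssim\|Sf\|_{L^p}$. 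This half uses only the vertical part of $\nabla_L$ and is therefore governed by the standard square-function theory for operators with Gaussian bounds.

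The reverse inclusion $H^p_L\subseteq H^p_S$ is where the novelty lies. Relying on the (by now standard) maximal characterisation $H^p_L=H^p_{L,\mathrm{at}}$, it suffices to show $\|Sa\|_{L^p}\lesssim1$ uniformly over $(p,2,M)$-atoms $a=L^M b$ associated with a ball $B$. I would split $\Ri^d$ into $4B$ and the annuli $C_j=2^{j+1}B\setminus 2^jB$: on $4B$ one combines the $L^2$ bound for $S$ with Hölder's inequality and the size normalisation of $a$, while on each $C_j$ the Davies--Gaffney decay of $A_j P_t$ and $\D_t P_t$ — composed with the $M$ powers of $L$ hidden in the atom — produces a factor $2^{-j\varepsilon}$ that is summable after raising to the power $p$. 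The main obstacle is precisely the control of the spatial components $A_j P_t=(\D_j+x_j)P_t$ of $\nabla_L$: unlike the vertical derivative they are not captured by the scalar functional calculus of $L$, the potential $|x|^2$ destroys translation invariance so every estimate must be routed through the Gaussian and Davies--Gaffney bounds of Step~2 rather than through convolution, and verifying that these first-order operators inherit enough off-diagonal decay to beat the factor $|B|^{1/2-1/p}$ for $p<1$ is the most delicate point of the whole argument.
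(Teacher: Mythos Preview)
Your treatment of the inclusion $H^p_S\subset H^p_L$ coincides with the paper's: the pointwise inequality $|\nabla_L P_t f|^2\ge|\partial_t P_t f|^2$ gives $\|\cdot\|_{H^p_A}\le\|\cdot\|_{H^p_S}$, and then the tent-space atomic decomposition together with the Calder\'on reproducing formula yields $H^p_A\cap L^2=H^p_{L,\mathrm{at},M}=H^p_L$. This is exactly Proposition~\ref{HpL = HpA} in the paper.

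For the reverse inclusion $H^p_L\subset H^p_S$ your route is genuinely different. You propose to show $\|Sa\|_{L^p}\lesssim 1$ directly for $(p,2,M)$-atoms, handling the spatial pieces $A_jP_t$ of $\nabla_L$ through Davies--Gaffney estimates. The paper instead passes through the nontangential maximal function: it first proves $H^p_L\subset H^p_{L,\max}$ (Proposition~\ref{HpL subset HpLmap}), and then establishes $\|Sf\|_{L^p}\lesssim\|f_L^*\|_{L^p}$ via a good-$\lambda$ inequality (Lemmas~\ref{size S} and~\ref{S equiv 2} and Proposition~\ref{S<f*}), following \cite{Jiz}. The good-$\lambda$ argument is the classical Fefferman--Stein mechanism: it compares the area integral to the maximal function locally and never needs off-diagonal bounds for the operators $A_jP_t$ at all, which is precisely the obstacle you flag as ``the most delicate point.''

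Your approach is viable, but note that the step you leave open is the one that carries all the Hermite-specific content. The cleanest way to close it is the intertwining identity $A_jL=(L+2)A_j$, which gives $A_jP_tL^Mb=(L+2)^Me^{-t\sqrt{L+2}}(A_jb)$; since $A_jb$ is still supported in $B$ with $\|A_jb\|_{L^2}\lesssim r^{2M-1}|B|^{1/2-1/p}$ (by $\sum_jA_j^*A_j\le L$ and the atom size condition), the far-annulus estimate reduces to the scalar functional calculus of $L+2$, and the argument of Lemma~\ref{Hat in HpA} goes through verbatim. Without this commutation trick, routing everything through pointwise kernel bounds for $A_je^{-tL}$ and subordination works too but is heavier. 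Either way your strategy yields a proof; the paper's advantage is that the good-$\lambda$ route bypasses this analysis entirely, while yours has the merit of staying inside the atomic/molecular machinery already built for $H^p_A$.
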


Following \cite{Tha2}, we also consider the Riesz transforms
\[
R_j^L := \left( \D_j + x_j \right) L^{-1/2}, \quad j \in \{1,2,3,\ldots\}.
\]
We have the following.

\begin{thrm} \label{Riesz bounded}
The operators $R_j^L$ is bounded on $H^p_L(\Ri^d)$ for all $j \in \{1,2,3,\ldots\}$.
\end{thrm}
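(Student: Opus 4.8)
The plan is to prove the theorem by reducing it to a uniform estimate on atoms and then showing that $R_j^L$ maps atoms into molecules. I begin with the easy $L^2$ bound, which also guarantees that $R_j^L$ is well defined. Writing $A_j := \D_j + x_j$ and $A_j^\ast := -\D_j + x_j$, a direct computation gives $\sum_{j=1}^d A_j^\ast A_j = -\Delta + |x|^2 - d = L - d$. Hence for $f \in D(L^{1/2})$ one has $\sum_{j=1}^d \|A_j f\|_{L^2}^2 = ((L - d) f, f) \le (Lf, f) = \|L^{1/2} f\|_{L^2}^2$. Since the spectrum of $L$ lies in $[d, \infty)$ with $d \ge 3$, the operator $L^{-1/2}$ is bounded on $L^2(\Ri^d)$; taking $f = L^{-1/2} g$ shows that each $R_j^L = A_j L^{-1/2}$ is a contraction on $L^2(\Ri^d)$.

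Next I would pass to the atomic picture. Because $\|\cdot\|_{H_L^p}^p$ is $p$-subadditive and $H_L^p(\Ri^d)$ admits an atomic (molecular) decomposition $f = \sum_i \lambda_i a_i$ with $\sum_i |\lambda_i|^p \approx \|f\|_{H_L^p}^p$ --- the decomposition that underlies the proof of Theorem \ref{main theorem} --- it suffices to produce a constant $C$, independent of the atom, such that $\|R_j^L a\|_{H_L^p} \le C$ for every $(p, 2, M)$-atom $a$, with $M = M(d, p)$ chosen large below; the $L^2$ bound then lets the image series converge and the estimate extends to the completion. To analyse $R_j^L a$ I would use the subordination formula $L^{-1/2} = \pi^{-1/2} \int_0^\infty e^{-sL} \, s^{-1/2} \, ds$, so that $R_j^L = \pi^{-1/2} \int_0^\infty A_j e^{-sL} \, s^{-1/2} \, ds$.

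The technical core is a set of off-diagonal (Davies--Gaffney) bounds for the gradient of the Hermite heat semigroup: for closed sets $E, F \subseteq \Ri^d$ and $u$ supported in $E$,
\[
\|A_j e^{-sL} u\|_{L^2(F)} \lesssim s^{-1/2} \, e^{-c \, \mathrm{dist}(E,F)^2 / s} \, \|u\|_{L^2(E)},
\]
which I would derive from the explicit Mehler kernel (whose spatial gradient is Gaussian bounded with the extra factor $s^{-1/2}$), together with the intertwining identity $A_j e^{-sL} = e^{-s(L + 2)} A_j$ coming from $[A_j, L] = 2 A_j$. Feeding these into the subordination integral and splitting it at $s = r_B^2$, I would estimate $\|R_j^L a\|_{L^2(U_k(B))}$ over the annuli $U_k(B) = 2^k B \setminus 2^{k-1} B$ and extract geometric decay $2^{-k \epsilon}$ of the order required by the molecular size condition; writing $a = L^M b$, the identity $R_j^L a = A_j L^{M - 1/2} b$ supplies the cancellation and regularity demanded of an $L$-molecule.

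Finally, since $L$ has Gaussian heat kernel bounds, every $(p, 2, M)$-molecule lies in $H_L^p(\Ri^d)$ with quasi-norm bounded by an absolute constant; invoking this (again part of the machinery behind Theorem \ref{main theorem}) yields $\|R_j^L a\|_{H_L^p} \le C$ and closes the argument. I expect the main obstacle to be the molecular verification: obtaining the gradient off-diagonal estimates with the sharp $s^{-1/2}$ scaling and genuine Gaussian tails in the presence of the growing potential $|x|^2$ --- exactly where the explicit Mehler kernel and the commutation $[A_j, L] = 2 A_j$ must be used --- and then summing the subordinated contributions over the annuli while retaining enough decay to meet the molecular condition for the full range $p \in (0, 1]$, which forces $M$ to be taken large relative to $d$ and $p$.
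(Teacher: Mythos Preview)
Your atom-to-molecule strategy is sound in outline and follows a standard template, but the paper takes a much shorter and structurally different route that exploits Theorem~\ref{main theorem} directly. The key observation is the intertwining $(\partial_j+x_j)\,f(L) = f(L+2)\,(\partial_j+x_j)$ (a consequence of the commutator identity $[\partial_j+x_j,L]=2(\partial_j+x_j)$ that you also note), applied not to the heat semigroup but to the \emph{Poisson} semigroup: one computes $\partial_t\, e^{-t\sqrt{L+2}}(R_j^L f) = -(\partial_j + x_j)\, e^{-t\sqrt{L}} f$, so that the vertical square function $A_{L+2}(R_j^L f)(x)$ is pointwise dominated by $S_L f(x)$. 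Combined with the elementary identification $H^p_L = H^p_{L+2}$ (immediate from $e^{-t(L+2)} = e^{-2t}e^{-tL}$ and a density argument) and the characterisations $\|\cdot\|_{H^p_{L+2}} \sim \|A_{L+2}\cdot\|_{L^p}$ and $\|\cdot\|_{H^p_L} \sim \|S_L\cdot\|_{L^p}$ already established in the paper, this yields $\|R_j^L f\|_{H^p_L} \le C\,\|f\|_{H^p_L}$ in a few lines---no atoms, no subordination integral, no molecular verification. Your approach would also work and is more portable to operators lacking such clean commutation relations, but here it rebuilds by hand what the Lusin characterisation already encodes; note too that your molecular step naturally produces an $(L+2)$-molecule rather than an $L$-molecule (since $R_j^L a = (L+2)^{M-1/2}(\partial_j+x_j)\,b$ via the same intertwining), so you would likely end up needing $H^p_L = H^p_{L+2}$ anyway.
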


%

The outline of the paper is as follows.
In proving Theorem \ref{main theorem} we will need two intermediate spaces $H_{L,\max}(\Ri^d)$ and $H_A^p(\Ri^d)$ which will be defined in the corresponding sections.
In Section \ref{pre} we provide some preliminaries for later use.
In Section \ref{max} we show that $H_L^p(\Ri^d) \subset H_{L,\max}(\Ri^d)$.
In Section \ref{A} we give a characterisation $H_L^p(\Ri^d) = H_A^p(\Ri^d) \cap L^2(\Ri^d)$.
In Section \ref{S} we prove the main theorem.
In Section \ref{Riesz} we prove Theorem \ref{Riesz bounded}.

Throughout the paper, we let $C$ be a positive constant independent of the main parameters whose value varies from line to line.
We also set $\Ni = \{0, 1, 2, \ldots\}$.

\section{Some prior estimates} \label{pre}

Let $\rho(x) = \displaystyle\frac{1}{1+|x|}$.

\begin{prop}[{\cite[Lemma 1.4]{She}}] \label{rho}
There exist $M>0$ and $k>0$ such that
\[
\frac{1}{M} \left( 1 + \frac{|x-y|}{\rho(x)} \right)^{-k} 
\leq \frac{\rho(y)}{\rho(x)} 
\leq M \left(1 + \frac{|x-y|}{\rho(x)} \right)^{-\frac{k}{k+1}}.
\]
In particular, $\rho(x) \sim \rho(y)$ if $|x-y| \lesssim \rho(x)$.
\end{prop}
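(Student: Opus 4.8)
The plan is to derive the proposition from Shen's Lemma 1.4 \cite{She} by identifying $\rho(x)=\frac{1}{1+|x|}$, up to a bounded factor, with the critical radius function of the potential $V(x)=|x|^2$ that underlies the Hermite operator $L=-\Delta+|x|^2$. For a nonnegative $V$ in a suitable reverse H\"older class, Shen sets
\[
\rho_V(x)=\sup\Big\{r>0:\ \frac{1}{r^{d-2}}\int_{B(x,r)}V(y)\,dy\le 1\Big\},
\]
and Lemma 1.4 of \cite{She} proves exactly the two-sided comparison in the statement, but with $\rho_V$ in place of $\rho$. It therefore suffices to check that the hypotheses of \cite{She} are met for $V(x)=|x|^2$ and that $\rho_V\sim\rho$; the inequality for $\rho$ then follows by transfer, while the concluding assertion $\rho(x)\sim\rho(y)$ when $|x-y|\lesssim\rho(x)$ is simply the special case in which $1+\frac{|x-y|}{\rho(x)}$ stays bounded.

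First I would verify the structural hypothesis: since $V(x)=|x|^2$ is a nonnegative polynomial, it lies in the reverse H\"older class $B_q$ for every $q\in(1,\infty)$ with a constant depending only on $d$, hence in particular in $B_{d/2}$, which for $d\ge 3$ is the class required in \cite{She}. Next I would size $\rho_V$ explicitly. Substituting $y=x+z$ and using that the first moment of $z$ over $B(0,r)$ vanishes gives
\[
\frac{1}{r^{d-2}}\int_{B(x,r)}|y|^2\,dy=\omega_d\,r^2\Big(|x|^2+\tfrac{d}{d+2}\,r^2\Big),\qquad \omega_d=|B(0,1)|,
\]
which is strictly increasing in $r$, so $\rho_V(x)$ is the unique $r$ for which it equals $1$. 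Evaluating the right-hand side at $r_0=\frac{1}{1+|x|}$ shows it lies between two positive dimensional constants for every $x$ (the $|x|^2$-term controls it when $|x|\ge 1$, the $r^2$-term when $|x|\le 1$); combined with the monotone, purely polynomial scaling in $r$, this forces $\rho_V(x)\sim\frac{1}{1+|x|}=\rho(x)$ uniformly in $x$.

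Finally I would transfer the estimate. From $\rho\sim\rho_V$ the ratios $\frac{\rho(y)}{\rho(x)}$ and $\frac{\rho_V(y)}{\rho_V(x)}$ are comparable and $1+\frac{|x-y|}{\rho(x)}\sim 1+\frac{|x-y|}{\rho_V(x)}$, so inserting these comparabilities into Shen's inequality for $\rho_V$ yields the stated inequality for $\rho$ with a new constant $M$ and the same exponents $k$ and $\frac{k}{k+1}$, a bounded perturbation of $\rho$ affecting only the constant, not the powers. I expect the main obstacle to be the sizing of $\rho_V$ in the previous step: one must handle both regimes $|x|\le 1$ and $|x|\gtrsim 1$ and argue that a bounded change in the value of the averaged potential corresponds to a bounded change in the critical radius. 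The reverse H\"older membership and the final transfer are then routine.
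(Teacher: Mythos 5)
Your proposal is correct and follows the same route as the paper: the paper's entire proof is the citation to Shen's Lemma 1.4, and you supply exactly the bridging verification that makes the citation legitimate --- that $V(x)=|x|^2$ is a nonnegative polynomial and hence lies in $B_{d/2}$ for $d\ge 3$, the explicit computation $\frac{1}{r^{d-2}}\int_{B(x,r)}|y|^2\,dy=\omega_d\,r^2\bigl(|x|^2+\tfrac{d}{d+2}r^2\bigr)$ showing $\rho_V(x)\sim\frac{1}{1+|x|}$ in both regimes, and the observation that a bounded comparability of $\rho$ and $\rho_V$ perturbs only the constant $M$, not the exponents. One caveat: Shen's upper bound carries the exponent $+\frac{k}{k+1}$, not $-\frac{k}{k+1}$ as misprinted in the proposition (the printed form is false: take $y=0$ and $|x|=R\to\infty$, where $\rho(y)/\rho(x)=1+R$ is unbounded while the printed right-hand side is at most $M$), so your transfer argument proves the corrected statement, and you should say so explicitly rather than asserting that Shen's lemma matches the printed inequality ``exactly''.
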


Next we consider some kernel estimates of the $C_0$-semigroup $T$ generated by $L$.

\begin{prop}
Let $g_t$ be the heat kernel of $T_t$ for each $t > 0$.
Then following hold.

\begin{tabeleq}
\item There exists a $C > 0$ such that for every $N>0$, there is a constant $C_N > 0$ that satisfies 
\[
0 \leq g_t(x,y) \leq C_N \, t^{-d/2} \, e^{-C|x-y|^2/t} \left( 1 + \frac{\sqrt{t}}{\rho(x)} \right)^{-N} \left( 1 + \frac{\sqrt{t}}{\rho(y)} \right)^{-N} 
\]
for all $x, y \in \Ri^d$.
\item There exist a $\delta \in (0,1)$ and a $C > 0$ such that for every $N>0$, there is a constant $C_N$ such that
\begin{equation} \label{kernel lipschitz}
|g_t(x+h,y) - g_t(x,y)| \leq C_N \left( \frac{|h|}{\sqrt{t}} \right)^\delta t^{-d/2} \, e^{-C|x-y|^2/t} \left( 1 + \frac{\sqrt{t}}{\rho(x)} \right)^{-N} \left( 1 + \frac{\sqrt{t}}{\rho(y)} \right)^{-N} 
\end{equation}
for all $x, y \in \Ri^d$ and $|h| < \sqrt{t}$.
\end{tabeleq}

\end{prop}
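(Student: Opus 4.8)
The plan is to derive both estimates directly from the explicit Mehler formula for the Hermite semigroup. Writing the one--dimensional kernel as a product over the coordinates, one has
\[
g_t(x,y) = \frac{e^{-dt}}{(\pi(1-e^{-4t}))^{d/2}}\,\exp\left(-\tfrac14\coth t\,|x-y|^2 - \tfrac14\tanh t\,|x+y|^2\right),\qquad t>0.
\]
(These are also special cases of known heat--kernel bounds for Schr\"odinger operators whose potential lies in the reverse--H\"older class $B_\infty$, here $|x|^2$, but the Mehler formula gives a self-contained route.) Both (i) and (ii) then follow from elementary one--variable estimates, separating the regimes $0<t\le 1$ and $t\ge 1$. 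The only genuinely structural point is the conversion of the factor $\exp(-\tfrac14\tanh t\,|x+y|^2)$ into the two critical--radius weights $(1+\sqrt t/\rho(x))^{-N}(1+\sqrt t/\rho(y))^{-N}$, where $\sqrt t/\rho(x) = \sqrt t(1+|x|)$.

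For (i) with $t\le 1$ I would use $1-e^{-4t}\gtrsim t$ to bound the prefactor by $Ct^{-d/2}$, the elementary inequalities $\coth t\ge 1/t$ and $\tanh t\gtrsim t$ to produce $\exp(-|x-y|^2/(4t))\exp(-ct|x+y|^2)$, and then reserve half of the Gaussian, say $\exp(-|x-y|^2/(8t))$, for the final bound. To obtain the weights I would use $|x|,|y|\le\tfrac12(|x-y|+|x+y|)$, so that for $t\le1$
\[
1+\tfrac{\sqrt t}{\rho(x)}\ \lesssim\ (1+\sqrt t\,|x-y|)(1+\sqrt t\,|x+y|),
\]
and likewise for $y$; the remaining Gaussian mass then absorbs these polynomial factors via $(1+s)^{M}\lesssim_{M}e^{\epsilon s^2}$, applied with $s=\sqrt t\,|x-y|$ (using $t\le1$) and with $s=\sqrt t\,|x+y|$ (so that $s^2=t|x+y|^2$). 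For $t\ge1$ the factor $e^{-dt}$ decays faster than any power of $t$ while $\coth t,\tanh t$ and the prefactor are bounded, so $g_t(x,y)\lesssim e^{-dt}e^{-c(|x|^2+|y|^2)}$, and one checks directly that this is dominated by the right--hand side of (i) after peeling off $e^{-C'|x-y|^2/t}$ (legitimate since $|x-y|^2/t\le 2(|x|^2+|y|^2)$) and comparing powers.

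For (ii) I would first differentiate the Mehler formula,
\[
\nabla_x g_t(x,y) = -\left(\tfrac12\coth t\,(x-y) + \tfrac12\tanh t\,(x+y)\right)g_t(x,y),
\]
and estimate the prefactor exactly as above: for $t\le1$ the terms $\tfrac{|x-y|}{t}\,e^{-|x-y|^2/(8t)}$ and $|x+y|\,e^{-ct|x+y|^2}$ are each $\lesssim t^{-1/2}$ times a slightly smaller Gaussian, which yields $|\nabla_x g_t(x,y)|\lesssim_N t^{-1/2}\,t^{-d/2}e^{-C|x-y|^2/t}(1+\sqrt t/\rho(x))^{-N}(1+\sqrt t/\rho(y))^{-N}$ (the case $t\ge1$ being again immediate from $e^{-dt}$). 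Then I would write $g_t(x+h,y)-g_t(x,y)=\int_0^1 h\cdot\nabla_x g_t(x+sh,y)\,ds$ and apply this bound, converting the weights and the Gaussian from base point $x+sh$ to $x$: since $|sh|\le|h|<\sqrt t$ one has $|x+sh-y|\ge|x-y|-\sqrt t$ and, by Proposition \ref{rho} together with $\sqrt t/\rho(x)\le \sqrt t/\rho(x+sh)+t$, the factors at $x+sh$ are comparable to those at $x$ up to constants and a harmless decrease of $N$. This produces \eqref{kernel lipschitz} with $\delta=1$; since $|h|<\sqrt t$, the Lipschitz bound $(|h|/\sqrt t)^1$ dominates $(|h|/\sqrt t)^\delta$ for any $\delta\in(0,1)$, giving the stated estimate.

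The main obstacle is not any single computation but the uniform bookkeeping across time scales: one must track how the ``fast'' weight $1/t$ on $|x-y|^2$ and the ``slow'' weight $t$ on $|x+y|^2$ conspire, after the splitting $|x|,|y|\le\tfrac12(|x-y|+|x+y|)$, to reproduce exactly the $\sqrt t$--scaled critical--radius weights, and to ensure that enough Gaussian mass is always left over to absorb the polynomial corrections --- both in the kernel bound and, after differentiation and the base--point change, in the gradient and H\"older estimates.
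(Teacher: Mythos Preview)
Your argument is essentially correct and is genuinely different from what the paper does. The paper's proof of this proposition is simply a citation: ``This follows from \cite[Lemma 2.2 and Proposition 3.1]{JPY}'', where the cited results establish such kernel bounds for a general class of (degenerate) Schr\"odinger operators with reverse--H\"older potentials, and the Hermite potential $|x|^2$ is then a special case. You instead exploit the fact that for the Hermite operator the heat kernel is \emph{explicit} via the Mehler formula, and reduce everything to elementary calculus: splitting $t\le 1$ versus $t\ge 1$, reading off the Gaussian in $|x-y|$ from the $\coth t$ term and the critical--radius decay from the $\tanh t\,|x+y|^2$ term, and then passing to the H\"older estimate through the gradient and the mean--value theorem. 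The trade--off is clear: the cited approach is more robust (it never uses an explicit formula and would survive perturbations of the potential), whereas yours is self--contained and in fact yields the sharper conclusion $\delta=1$ in \eqref{kernel lipschitz}. Two small points of bookkeeping worth tightening in a full write--up: your phrase ``$(|h|/\sqrt t)^1$ dominates $(|h|/\sqrt t)^\delta$'' has the inequality the wrong way around numerically (since $|h|/\sqrt t<1$), though your intended logic --- Lipschitz implies H\"older on this range --- is of course correct; and in the base--point change for (ii) your inequality $\sqrt t/\rho(x)\le \sqrt t/\rho(x+sh)+t$ gives a factor $(1+t)^N$ which is harmless for $t\le 1$ and must be absorbed into $e^{-dt}$ for $t\ge 1$, so make sure the large--time case is stated so as to cover the gradient bound and not only the kernel bound.
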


\begin{proof}
This follows from \cite[Lemma 2.2 and Proposition 3.1]{JPY}.
\end{proof}

\begin{prop} \label{Poisson kernel}
Let $p_t$ be the heat kernel of $P_t$ for each $t > 0$.
Then following hold.
\begin{tabeleq}
\item For every $N>0$, there is a constant $C_N > 0$ such that 
\[
0 \leq p_t(x,y) \leq C_N \, \frac{t}{(t + 2|x-y|)^{d+1}} \left( 1 + \frac{\sqrt{t}}{\rho(x)} \right)^{-N} \left( 1 + \frac{\sqrt{t}}{\rho(y)} \right)^{-N} 
\]
for all $x, y \in \Ri^d$.
\item There exist a $\delta \in (0,1)$ and a $C > 0$ such that for every $N>0$, there is a constant $C_N$ such that
\begin{equation} \label{poisson kernel lipschitz}
|p_t(x+h,y) - p_t(x,y)| \leq C_N \left( \frac{|h|}{t} \right)^\delta \frac{t}{(t + 2|x-y|)^{d+1}} \, \left( 1 + \frac{\sqrt{t}}{\rho(x)} \right)^{-N} \left( 1 + \frac{\sqrt{t}}{\rho(y)} \right)^{-N} 
\end{equation}
for all $x, y \in \Ri^d$ and $|h| < \sqrt{t}$.
\end{tabeleq}
\end{prop}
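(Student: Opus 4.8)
The natural route is subordination, which expresses the Poisson semigroup in terms of the heat semigroup already estimated in the preceding proposition. The plan is to start from the subordination identity
\[
e^{-tL^{1/2}} = \int_0^\infty \varphi_t(s) \, e^{-sL} \, ds, \qquad \varphi_t(s) = \frac{t}{2\sqrt{\pi}} \, s^{-3/2} \, e^{-t^2/(4s)},
\]
valid on $L^2(\Ri^d)$ by the spectral theorem, and to pass to kernels to obtain
\[
p_t(x,y) = \int_0^\infty \varphi_t(s) \, g_s(x,y) \, ds
\]
for all $x,y \in \Ri^d$ and $t>0$. Since $\varphi_t \geq 0$ and $g_s \geq 0$, the lower bound $p_t(x,y) \geq 0$ in (i) is immediate, and the entire proposition reduces to estimating this one-parameter integral against the bounds on $g_s$.

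For the upper bound in (i) I would insert the Gaussian heat bound. Setting aside the critical-radius factors, the core quantity is
\[
\int_0^\infty \frac{t}{s^{3/2}} \, e^{-t^2/(4s)} \, s^{-d/2} \, e^{-C|x-y|^2/s} \, ds,
\]
which, after the substitution $s = \big(\tfrac{t^2}{4} + C|x-y|^2\big)/u$, collapses to a Gamma integral and evaluates to a constant multiple of $t \, (t^2 + |x-y|^2)^{-(d+1)/2}$. Since $t^2 + |x-y|^2 \sim (t + 2|x-y|)^2$, this is precisely the claimed factor $t \, (t + 2|x-y|)^{-(d+1)}$. The remaining and genuinely delicate point is the pair of weights $(1+\sqrt{s}/\rho(x))^{-N}(1+\sqrt{s}/\rho(y))^{-N}$ inside the integral. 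Here I would split the $s$-integral at $s = t$: on the range $s \geq t$ one has $\sqrt{s} \geq \sqrt{t}$, so $(1+\sqrt{s}/\rho(\cdot))^{-N} \leq (1+\sqrt{t}/\rho(\cdot))^{-N}$ can be pulled out directly, leaving an integral dominated by the spatial factor above; the complementary range $s < t$ must be absorbed into the fast decay of either $\varphi_t$ or the heat weight. This transfers the heat-scale weights to the stated Poisson-scale factors $(1+\sqrt{t}/\rho(\cdot))^{-N}$, generally at the cost of decreasing $N$.

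For the Hölder estimate (ii) I would difference the kernel identity and feed in \eqref{kernel lipschitz}. On the range $s > |h|^2$, where $|h| < \sqrt{s}$, the heat Hölder bound supplies the factor $(|h|/\sqrt{s})^\delta$; integrating against $\varphi_t$ and using $\sqrt{s} \sim t$ on the bulk window $s \sim t^2$ produces the desired $(|h|/t)^\delta$. On the complementary range $s \leq |h|^2$ the Hölder bound is unavailable, so I would instead bound $|g_s(x+h,y) - g_s(x,y)|$ by the sum of the two size estimates and insert the harmless power $(|h|^2/s)^{\delta/2} \geq 1$ to recover a factor $(|h|/\sqrt{s})^\delta$; the hypothesis $|h| < \sqrt{t}$ confines this range to the region where $\varphi_t$ provides control. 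The weight factors are carried through exactly as in (i).

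The main obstacle is not the spatial decay, which is the classical subordination computation, but the bookkeeping of the critical-radius weights through the $s$-integral, together with the splitting needed in (ii) for the low range $s \leq |h|^2$. Making the transfer of $(1+\sqrt{s}/\rho(\cdot))^{-N}$ to $(1+\sqrt{t}/\rho(\cdot))^{-N}$ rigorous, and tracking the loss in the exponent $N$, is where the real work lies.
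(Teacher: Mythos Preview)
The paper does not prove this proposition at all; it simply cites \cite[Propositions 3.4 and 3.5]{JPY}. Your subordination sketch is therefore not a different route so much as a reconstruction of the argument the paper outsources, and subordination is indeed the method used in \cite{JPY}. The core computation you describe---integrating $\varphi_t(s)\,s^{-d/2}e^{-C|x-y|^2/s}$ via the substitution $s \mapsto (t^2/4 + C|x-y|^2)/u$ to produce $t\,(t+2|x-y|)^{-(d+1)}$---is correct, as is the strategy of differencing under the subordination integral for (ii) and splitting at $s = |h|^2$.

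One technical point worth flagging: your split for the critical-radius weights at $s = t$ is the right idea, since $s \geq t$ gives $(1+\sqrt{s}/\rho)^{-N} \leq (1+\sqrt{t}/\rho)^{-N}$ directly. On the range $s < t$, however, you should be explicit that the needed decay comes from $\varphi_t$: writing $e^{-t^2/(4s)} \leq C_M (s/t^2)^M$ generates arbitrary negative powers of $t$, and since $\rho \leq 1$ one has $(1+\sqrt{t}/\rho)^{-N} \geq C_N\,t^{-N/2}\rho^N$ for $t \geq 1$ (while for $t \leq 1$ the weight is bounded below by a constant depending only on $N$), so the loss is absorbed by taking $M$ large. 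This is the ``bookkeeping'' you allude to, and it goes through, but the interplay between the parabolic scale $\sqrt{s}$ in the heat estimate and the stated scale $\sqrt{t}$ in the Poisson estimate (rather than the more natural $t$) is exactly where one must be careful with exponents.
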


\begin{proof}
This follows from \cite[Propositions 3.4 and 3.5]{JPY}.
\end{proof}

%
%

We end this preliminary section with an atom decomposition of the space $H_L^p(\Ri^d)$.
Let $p \in (0, 1]$.
We first define $H_L^p$-atoms.
\begin{defi}
Let $x_0 \in \Ri^d$ and $r > 0$.
A function $a$ is an $H_L^p$-atom associated with a ball $B(x_0,r)$ if
\begin{tabeleq}
\item $\supp a \subset B(x_0, r)$,
\item $\|a\|_{L^\infty} \leq |B(x_0,r)|^{-1/p}$ and
\item $\int_{\Ri^d} a = 0$ if $r < \rho(x_0)/4$ (moment condition).
\end{tabeleq}
\end{defi}

It is of fundamental importance that each $H_L^p$-function can be written as the sum of $H_L^p$-atoms and that the $H_L^p$-quasi-norm is equivalent to that given by the atom decomposition.

\begin{prop} \label{atom decom}
Let $\frac{d}{d+\delta} < p \leq 1$, where $\delta$ is given by \eqref{kernel lipschitz}.
Then for all $f \in H_L^p(\Ri^d)$, there exist a sequence $\{c_j\} \subset l^p(\Ri)$ and $H_L^p$-atoms $a_j$'s such that
\begin{equation} \label{f into atoms}
f = \sum_{j \in \Ni} c_j a_j
\end{equation}
in $H_L^p(\Ri^d)$.

For all $f \in H_L^p(\Ri^d)$, define the quasi-norm
\[
\|f\|_{L,at} = \inf\left\{ \Big( \sum_{j \in \Ni} |c_j|^p \Big)^{1/p} \right\}
\]
where the infimum is taken over all decompositions \eqref{f into atoms}.
Then
\[
\left( H_L^p(\Ri^d), \|\cdot\|_{H_L^p} \right) = \left( H_L^p(\Ri^d), \|\cdot\|_{L,at} \right).
\]
\end{prop}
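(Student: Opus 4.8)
The plan is to prove the two quasi-norm inequalities that together yield the stated identity, treating the easier \emph{synthesis} bound first and then the harder \emph{decomposition} direction; since $H_L^p(\Ri^d)$ is a completion, it suffices to argue on the dense class $\{f \in L^2(\Ri^d): \mathcal{M}_L f \in L^p\}$ and extend by density.

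First I would establish $\|f\|_{H_L^p} \lesssim \|f\|_{L,at}$ by reducing to a single uniform estimate on atoms: for every $H_L^p$-atom $a$ associated with a ball $B(x_0,r)$ one has $\|\mathcal{M}_L a\|_{L^p} \leq C$ with $C$ independent of the atom. To see this I would split the integral of $(\mathcal{M}_L a)^p$ over $2B := B(x_0,2r)$ and over its complement. On $2B$ the contribution is handled by H\"older's inequality together with the $L^2$-boundedness of $\mathcal{M}_L$ and the size bound (ii). On $(2B)^c$ I would use the pointwise heat kernel bounds: when $r < \rho(x_0)/4$ the moment condition (iii) lets me subtract a constant and invoke the H\"older estimate \eqref{kernel lipschitz}, producing polynomial decay in $|x-x_0|/\rho(x_0)$ that is $p$-integrable precisely because $p > d/(d+\delta)$; when $r \geq \rho(x_0)/4$ no cancellation is needed and the Gaussian factor together with the $(1+\sqrt{t}/\rho)^{-N}$ localization (Proposition \ref{rho}) already yields integrable decay. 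Summing over $j$ with the quasi-triangle inequality for $p \leq 1$ then gives the bound.

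For the reverse inequality $\|f\|_{L,at} \lesssim \|f\|_{H_L^p}$ I would run a Calder\'on--Zygmund decomposition adapted to the admissible function $\rho$. The first step is to replace $\mathcal{M}_L$ by an essentially equivalent grand maximal function $\mathcal{M}^\ast f$, obtained by testing $f$ against a class of normalized bumps adapted to $L$; the comparison $\mathcal{M}^\ast f \sim \mathcal{M}_L f$ in $L^p$ is where the kernel regularity \eqref{kernel lipschitz} enters. Working with $f \in L^2$, I would form the level sets $\Omega_k = \{\mathcal{M}^\ast f > 2^k\}$, take a Whitney covering $\{B_i^k\}$ of each $\Omega_k$ by $\rho$-balls of controlled overlap, choose a subordinate partition of unity $\{\phi_i^k\}$, and set the bad parts $b_i^k = (f - c_i^k)\phi_i^k$, subtracting the mean value $c_i^k$ only when the Whitney ball is small relative to $\rho$ so as to install the correct cancellation. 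Writing $g^k = f - \sum_i b_i^k$, the desired atoms are the normalized telescoping differences built from $g^k - g^{k+1}$.

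The remaining work is to verify that these building blocks are, up to a fixed multiplicative constant, $H_L^p$-atoms, and to control the coefficients. The support and $L^\infty$-size conditions (i), (ii) follow from the Whitney geometry and from $|f| \leq \mathcal{M}^\ast f \leq 2^k$ off $\Omega_k$, while the moment condition (iii) is arranged exactly for the small Whitney balls; the geometry of $\rho$ in Proposition \ref{rho} guarantees these choices are consistent. The coefficient sum is estimated by
\[
\sum_j |c_j|^p \lesssim \sum_k 2^{kp}\,|\Omega_k| \sim \|\mathcal{M}^\ast f\|_{L^p}^p \sim \|f\|_{H_L^p}^p,
\]
and convergence of $\sum_j c_j a_j$ to $f$ holds in $L^2$ (using $f \in L^2$) and hence in $H_L^p$. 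I expect the main obstacle to be the grand maximal function comparison and the verification that the bad functions carry the correct cancellation so that the telescoped atoms satisfy the moment condition throughout the full range $p > d/(d+\delta)$: this exponent threshold is precisely what renders the H\"older tail estimates $p$-summable, and tracking the $\rho$-localization consistently is the most delicate bookkeeping. Alternatively, one may check that $L$ satisfies the structural hypotheses of \cite{JPY} and quote their atomic decomposition directly.
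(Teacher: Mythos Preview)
Your sketch is essentially sound, but you should know that the paper does not carry out any of this: its entire proof of this proposition is the single sentence ``This follows from \cite[Theorem 2.15]{BDL}.'' In other words, the authors verify that the Hermite operator fits the abstract framework of Bui--Duong--Ly for local Hardy spaces on spaces of homogeneous type (the Gaussian upper bound, the H\"older continuity \eqref{kernel lipschitz}, and the critical function $\rho$ satisfying Proposition~\ref{rho}) and then import the atomic decomposition wholesale. Your closing alternative---quote an external atomic decomposition after checking structural hypotheses---is exactly the paper's approach, only you name \cite{JPY} rather than \cite{BDL}.

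What your main outline gains over the paper's one-line proof is transparency: you expose where the threshold $p>d/(d+\delta)$ actually matters (the $p$-integrability of the H\"older tail in the synthesis bound, and the summability in the Calder\'on--Zygmund construction), and you make visible the role of the $\rho$-dependent moment condition in distinguishing small from large Whitney balls. What the citation buys is brevity and completeness: the step you flag as the main obstacle---the grand-maximal comparison and the bookkeeping needed to ensure the telescoped pieces inherit the right moment condition across all heights---is genuinely delicate, and a sketch at this level of detail would not survive as a self-contained proof without substantially more work (in particular, the comparison $\mathcal{M}^\ast f \sim \mathcal{M}_L f$ and the $L^\infty$ bound on the atoms require careful argument, not just ``$|f|\le \mathcal{M}^\ast f$'' which is not literally true pointwise). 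If you intend to replace the citation with a direct proof, expect several pages along the lines of \cite{BDL} or Dziuba\'nski--Zienkiewicz; otherwise the paper's route is the efficient one.
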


\begin{proof}
This follows from \cite[Theorem 2.15]{BDL}.
\end{proof}

\section{$H_L^p \subset H_{L,\max}^p$} \label{max}

For each $x \in \Ri^d$ and $f \in L^2(\Ri^d)$, define
\[
f_L^*(x) = \sup_{\{(t,y) \in \Ri_+ \times \Ri^d: |x-y|<t\}} |P_t f(y)|.
\]
Let $p \in (0,1]$.
We define $H_{L,\max}^p(\Ri^d)$ be the completion of
\[
\{f \in L^2(\Ri^d): f_L^* \in L_p(\Ri^d)\}
\]
under the quasi-norm
\[
\|f\|_{H_{L,\max}^p(\Ri^d)} = \|f_L^*\|_{L^p(\Ri^d)}, \quad f \in H_{L,\max}^p(\Ri^d).
\]

\begin{prop} \label{HpL subset HpLmap}
Let $\frac{d}{d+\delta} < p \leq 1$, where $\delta$ is given by \eqref{poisson kernel lipschitz}.
Then
\[
H_L^p(\Ri^d) \subset H_{L,\max}^p(\Ri^d).
\] 
\end{prop}

\begin{proof}
Due to Proposition \ref{atom decom}, it suffices to check that there exists a $C>0$ such that
$
\|a\|_{H_{L,\max}^p} \leq C
$
for all $H_L^p$-atom $a$.

Let $a$ be an $H_L^p$-atom.
Let $y_0 \in \Ri^d$ and $r > 0$ such that $\supp a \subset B(y_0, r) =: B$.
Consider 
\[
\|a\|_{H_{L,\max}^p} 
= \int_{\Ri^d} |a^*_L|^p 
= \int_{2B} |a^*_L|^p + \int_{(2B)^C} |a^*_L|^p
=: I_1 + I_2,
\]
where $2B := B(y_0,2r)$ and $(2B)^C = \Ri^d \setminus (2B)$.

For $I_1$, we have
\[
|P_t a(y)| 
\leq \|a\|_{L^\infty} \int_{B} |p_t(y,z)| \, dz 
\leq C \, \|a\|_{L^\infty} \int_{B} \frac{t}{(t + |y-z|)^{d+1}} \, dz
\leq C \, |2B|^{-1/p}
\]
for all $y \in B$, where we used Proposition \ref{Poisson kernel} in the second step and \cite[Lemma 2.1]{BDY} in the third step.
It follows that
$
I_1 \leq C.
$

To estimate $I_2$, we first show that there exists a $C > 0$ such that
\begin{equation} \label{2BC}
|a_L^*(x)| \leq C \, |B|^{1+1/d-1/p} \, \frac{1}{|x-y_0|^{d+1}}
\end{equation}
for all $x \in (2B)^C$.

Let $x \in (2B)^C$.
We consider the following two cases.

\noindent
{\bf Case 1:} Suppose that $r < \rho(y_0)/4$. 
Then $a$ satisfies the moment condition.
For all $y \in \Ri^d$ and $t > 0$ such that $|x-y| < t$, we have
\begin{eqnarray*}
|P_t a(y)| &=& \Big| \int_{B} \left( p_t(y,z) - p_t(y,y_0) \right) \, a(z) \, dz \, \Big|
\leq \|a\|_{L^\infty} \int_{B} \left| p_t(y,z) - p_t(y,y_0) \right| \, dz
\\
&\leq& C \, \|a\|_{L^\infty} \int_{B} \left( \frac{|z-y_0|}{t} \right)^\delta \frac{t}{(t+|y-y_0|)^{d+1}} \, \left( \frac{\rho(y_0)}{t} \right)^{1-\delta} \, dz
\\
&\leq& C \, \|a\|_{L^\infty} \int_{B} \left( \frac{|z-y_0|}{t} \right)^\delta \frac{t}{(|x-y|+|y-y_0|)^{d+1}} \, \left( \frac{\rho(y_0)}{t} \right)^{1-\delta} \, dz
\\
&\leq& C \, \|a\|_{L^\infty} \int_{B} \frac{|z-y_0|^\delta \, \rho(y_0)^{1-\delta}}{|x-y_0|^{d+1}} \, dz,
\end{eqnarray*}
where we used Proposition \ref{Poisson kernel} in the third step.
Note that $|z-y_0| \leq r$ as $z \in B$.
Also $\rho(y_0) \leq Cr$ for some $C > 0$ by Proposition \ref{rho}.
These imply
\[
|P_t a(y)| 
\leq C \, \|a\|_{L^\infty} \int_{B} \frac{r}{|x-y_0|^{d+1}} \, dz
\leq C \, |B|^{1+1/d-1/p} \, \frac{1}{|x-y_0|^{d+1}}
\]
for all $y \in \Ri^d$ and $t > 0$ such that $|x-y| < t$.
Hence \eqref{2BC} follows.

\noindent
{\bf Case 2:} Suppose that $r \geq \rho(y_0)/4$.
So $a$ need not satisfy the moment condition. 
For all $y$ and $z$ such that $|x-y| < t$ and $|y_0-z|<r$, we have
\[
t + |y-z| \geq t + |x-y_0| - |x-y| - |y_0-z| \geq |x-y_0| - r \geq \frac{|x-y_0|}{2},
\]
where we used the fact that $x \in (2B)^C$ in the last step.
By Proposition \ref{rho}, there exists a $C > 0$ such that $\rho(z) \leq C r$ for all $z \in B$.
Therefore
\begin{eqnarray*}
|P_t a(y)| &=& \Big| \int_{B} p_t(y,z) \, a(z) \, dz \, \Big|
\leq \|a\|_{L^\infty} \int_{B} \left| p_t(y,z) \right| \, dz
\leq C \, \|a\|_{L^\infty} \int_{B} \frac{t}{(t+|y-z|)^{d+1}} \, \frac{\rho(z)}{t} \, dz
\\
&\leq& C \, \|a\|_{L^\infty} \int_{B} \frac{r}{|x-y_0|^{d+1}} \, dz
\leq C \, |B|^{1+1/d-1/p} \, \frac{1}{|x-y_0|^{d+1}}
\end{eqnarray*}
for all $y \in \Ri^d$ and $t > 0$ such that $|x-y| < t$, where we used Proposition \ref{Poisson kernel} in the third step.
Hence \eqref{2BC} follows.

Having proved \eqref{2BC}, we now obtain
\[
I_2 
= \int_{(2B)^C} |a_L^*|^p 
\leq C |B|^{(1+1/d-1/p)p} \int_{(2B)^C} \frac{1}{|x-y_0|^{(d+1)p}} \, dx
\leq C |B|^{p+p/d-1} \, |B|^{1-(d+1)p/d} = C.
\]
Thus the proposition follows.
\end{proof}

\section{$H_L^p = H_A^p \cap L^2$} \label{A}

The work in this section is inspired by \cite[Chapter 4]{HLMMY} whose results are for $p=1$ on spaces of homogeneous type.

Define 
\[
Af(x) 
:= \left( \int_0^\infty \int_{|x-y|<t} \left|t \, (\D_tP_t f)(y) \right|^2 \, \frac{dy \, dt}{t^{d+1}} \right)^{1/2}
= \left( \int_0^\infty \int_{|x-y|<t} t^{1-d} \, \left|(\D_tP_t f)(y) \right|^2 \, dy \, dt \right)^{1/2},
\]
where $f \in L^2(\Ri^d)$ and $x \in \Ri^d$.

Next we show that $A$ is a bounded operator on $L^2(\Ri^d)$.
For this we need the following lemma.

\begin{lemm} \label{aux lemm}
Let $s > 0$.
Let $\psi: (0,\infty) \longrightarrow \Ci$ be such that 
\[
|\psi(t)| \leq C \frac{t^s}{1 + t^{2s}}
\]
for some $C > 0$ and for all $t > 0$.
Then there exists a $C > 0$ such that
\[
\left( \int_0^\infty \|\psi(t\sqrt{L})\|_{L^2(\Ri^d)}^2 \, \frac{dt}{t} \right)^{1/2}
= C \, \|f\|_{L^2(\Ri^d)}.
\]
\end{lemm}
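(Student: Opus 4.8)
The plan is to exploit the discrete spectral decomposition of the Hermite operator together with the scale invariance of the multiplicative Haar measure $dt/t$. Since $L$ is self-adjoint and positive with the orthonormal basis of Hermite functions $\{h_\alpha\}$ as eigenfunctions, $\sqrt{L}$ acts as multiplication by $\lambda_n := (2n+d)^{1/2}$ on the eigenspace spanned by $\{h_\alpha : |\alpha| = n\}$. For any $f \in L^2(\Ri^d)$ I would write $f = \sum_{n \in \Ni} f_n$ with $f_n = \sum_{|\alpha| = n} (f, h_\alpha) \, h_\alpha$, so that by the spectral theorem $\psi(t\sqrt{L}) f = \sum_{n \in \Ni} \psi(t\lambda_n) \, f_n$ and, by Parseval's identity, $\|\psi(t\sqrt{L}) f\|_{L^2}^2 = \sum_{n \in \Ni} |\psi(t\lambda_n)|^2 \, \|f_n\|_{L^2}^2$. (Note the statement should read $\|\psi(t\sqrt{L}) f\|_{L^2}$ on the left-hand side.)

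Integrating against $dt/t$ and applying Tonelli's theorem, which is legitimate because every summand is nonnegative, to interchange the sum and the integral, I would obtain
\[
\int_0^\infty \|\psi(t\sqrt{L}) f\|_{L^2}^2 \, \frac{dt}{t} = \sum_{n \in \Ni} \|f_n\|_{L^2}^2 \int_0^\infty |\psi(t\lambda_n)|^2 \, \frac{dt}{t}.
\]
The key observation is that each inner integral is independent of $\lambda_n$: the substitution $u = t\lambda_n$ leaves $dt/t = du/u$ invariant, so $\int_0^\infty |\psi(t\lambda_n)|^2 \, dt/t = \int_0^\infty |\psi(u)|^2 \, du/u =: C^2$ for every $n$.

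It then remains to verify that $C^2 = \int_0^\infty |\psi(u)|^2 \, du/u$ is finite, which is exactly where the hypothesis on $\psi$ enters. Using $|\psi(u)| \leq C \, u^s/(1 + u^{2s})$, the integrand $|\psi(u)|^2/u$ is dominated by a multiple of $u^{2s-1}$ near $u = 0$ and by a multiple of $u^{-2s-1}$ near $u = \infty$; both are integrable precisely because $s > 0$, so $C < \infty$. Substituting back gives $\int_0^\infty \|\psi(t\sqrt{L}) f\|_{L^2}^2 \, dt/t = C^2 \sum_{n \in \Ni} \|f_n\|_{L^2}^2 = C^2 \, \|f\|_{L^2}^2$ by Parseval once more, and taking square roots yields the claim with the explicit constant $C = \left( \int_0^\infty |\psi(u)|^2 \, du/u \right)^{1/2}$. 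I do not expect a serious obstacle here: the entire argument is essentially a one-line spectral computation once the scale invariance of $dt/t$ is noted, and the only points demanding care are the justification of the Tonelli interchange and the convergence of the defining integral for $C$, both of which follow immediately from $s > 0$.
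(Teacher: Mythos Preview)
Your proof is correct and follows essentially the same approach as the paper: both arguments use the spectral calculus for $\sqrt{L}$ together with the scale invariance of $dt/t$ to reduce to the single integral $\int_0^\infty |\psi(u)|^2\,du/u$, whose finiteness is guaranteed by the decay hypothesis on $\psi$. The only cosmetic difference is that you work with the concrete discrete eigenfunction expansion $f=\sum_n f_n$ of the Hermite operator, whereas the paper phrases the same computation via the abstract spectral measure $dE_{\sqrt{L}}(\lambda)$; your version is arguably cleaner here and also correctly flags the missing $f$ in the statement.
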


\begin{proof}
Let $C := (\int_0^\infty |\psi(t)|^2 \frac{dt}{t})^{1/2} < \infty$.
Then
\begin{eqnarray*}
\int_0^\infty \|\psi(t\sqrt{L})\|_{L^2(\Ri^d)}^2 \, \frac{dt}{t}
&=& \int_0^\infty \Big( \psi(t\sqrt{L}) f, \psi(t\sqrt{L}) f \Big) \, \frac{dt}{t}
= \left( \int_0^\infty |\psi|^2(t\sqrt{L}) \, \frac{dt}{t} f, f \right)
\\
&=& \left( \int_0^\infty \Big( \int_0^\infty |\psi|^2(t\sqrt{\lambda}) \, dE_{\sqrt{L}}(\lambda) \Big) \, \frac{dt}{t} f, f \right)
\\
&=& \int_0^\infty \left( \int_0^\infty |\psi|^2(t\sqrt{\lambda}) \, \frac{dt}{t} f, f \right) \, dE_{\sqrt{L}}(\lambda)
= C \|f\|_{L^2(\Ri^d)}^2,
\end{eqnarray*}
where $E_{\sqrt{L}}(\lambda)$ is the spectral decomposition of $\sqrt{L}$.
\end{proof}

\begin{prop} \label{A L2 bounded}
The operator $A$ is bounded on $L^2(\Ri^d)$.
\end{prop}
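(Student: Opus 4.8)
The plan is to collapse the cone integral defining $A$ into a purely vertical square function by Fubini's theorem, and then to recognise the resulting integrand as $\psi(t\sqrt{L})f$ for a suitable $\psi$ so that Lemma~\ref{aux lemm} applies. First I would write out
\[
\|Af\|_{L^2(\Ri^d)}^2 = \int_{\Ri^d} \int_0^\infty \int_{|x-y|<t} t^{1-d} \, |(\D_t P_t f)(y)|^2 \, dy \, dt \, dx,
\]
and exchange the order of integration, carrying out the $x$-integration first. Since $\int_{\Ri^d} \mathbf{1}_{\{|x-y|<t\}} \, dx = c_d \, t^d$, where $c_d := |B(0,1)|$, the factor $t^{1-d}$ combines with $t^d$ to leave a single power of $t$, giving
\[
\|Af\|_{L^2(\Ri^d)}^2 = c_d \int_0^\infty \int_{\Ri^d} |(\D_t P_t f)(y)|^2 \, t \, dy \, dt = c_d \int_0^\infty \|\D_t P_t f\|_{L^2(\Ri^d)}^2 \, t \, dt.
\]
The exchange is legitimate by Tonelli's theorem because the integrand is nonnegative and measurable, so no separate integrability check is needed.

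Next, since $P_t = e^{-t\sqrt{L}}$ and $\sqrt{L}$ is a nonnegative self-adjoint operator, $P$ is a bounded holomorphic semigroup and $\D_t P_t f = -\sqrt{L} \, e^{-t\sqrt{L}} f$ is well defined in $L^2(\Ri^d)$ for every $t>0$. Hence $t \, \D_t P_t f = -\psi(t\sqrt{L}) f$, where $\psi(\lambda) := \lambda \, e^{-\lambda}$, and therefore
\[
\|Af\|_{L^2(\Ri^d)}^2 = c_d \int_0^\infty \|\psi(t\sqrt{L}) f\|_{L^2(\Ri^d)}^2 \, \frac{dt}{t}.
\]
I would then verify that $\psi$ meets the hypothesis of Lemma~\ref{aux lemm} with exponent $s=1$: indeed $|\psi(\lambda)| = \lambda \, e^{-\lambda} \leq C \, \lambda/(1+\lambda^2)$ for all $\lambda>0$, since near $0$ one has $\psi(\lambda) \sim \lambda$ while for large $\lambda$ the exponential decay dominates $\lambda^{-1}$. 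Applying Lemma~\ref{aux lemm} then yields $\|Af\|_{L^2(\Ri^d)} = C \, \|f\|_{L^2(\Ri^d)}$, which is the asserted $L^2$-boundedness (in fact an equivalence of norms).

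The computation is essentially routine; the only points requiring care are the correct bookkeeping of the powers of $t$ in the Fubini step and the verification that $\D_t P_t f$ genuinely lands in $L^2(\Ri^d)$, which follows from the holomorphy of the Poisson semigroup generated by the self-adjoint operator $\sqrt{L}$. No genuine obstacle arises here: the spectrum of $L$ equals $\{2n+d : n \in \Ni\}$ and is thus bounded away from $0$, so the spectral integral underlying Lemma~\ref{aux lemm} is controlled and converges without difficulty.
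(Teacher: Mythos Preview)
Your proof is correct and follows essentially the same route as the paper's: collapse the cone integral via Fubini/Tonelli, identify $t\,\D_t P_t f$ with $t\sqrt{L}\,e^{-t\sqrt{L}}f$, and invoke Lemma~\ref{aux lemm} with $\psi(\lambda)=\lambda e^{-\lambda}$. Your write-up is in fact more careful than the paper's, making explicit the justification of the interchange of integration and the verification that $\psi$ satisfies the decay hypothesis of Lemma~\ref{aux lemm}.
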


\begin{proof}
Let $f \in L^2(\Ri^d)$. 
Then
\begin{eqnarray*}
\|Af\|_{L^2(\Ri^d)}^2
&=& \int_{\Ri^d} \int_0^\infty \int_{|x-y|<t} t^{1-d} \, \left|(\D_tP_t f)(y) \right|^2 \, dy \, dt \, dx
\\
&=& \int_0^\infty \int_{\Ri^d} \int_{|x-y|<t} t^{1-d} \, \left|(\D_tP_t f)(y) \right|^2 \, dx \, dy \, dt
\\
&=& C \int_0^\infty \int_{\Ri^d} t \, \left|(\D_tP_t f)(y) \right|^2 \, dy \, dt
= C \int_{\Ri^d} \int_0^\infty \left| \big( t \, \sqrt{L} \, e^{-t\sqrt{L}} f \big)(y) \right|^2 \, \frac{dt}{t} \, dy
\\
&=& C \, \|f\|_{L^2(\Ri^d)}^2
\end{eqnarray*}
for some $C > 0$, where we used Lemma \ref{aux lemm} in the last step.
\end{proof}

Let $p \in (0,1]$.
We define $H_A^p(\Ri^d)$ as the completion of 
\[
\big\{f \in L^2(\Ri^d): Af \in L^p(\Ri^d) \big\}
\]
under the quasi-norm
\[
\|\cdot\|_{H_A^p(\Ri^d)} = \|A\cdot\|_{L^p(\Ri^d)}.
\]

\begin{defi}
Let $M$ be a positive integer.
A function $a \in L^2(\Ri^d)$ is called a $(p,2,M)$-atom associated to the operator $L$ if there exist a function $b \in D(L^M)$ and a ball $B$ such that
\begin{tabeleq}
\item $a = L^M b$,
\item $\supp L^k b \subset B$, where $k = 0, 1, \ldots, M$,
\item $\|(r^2_B L)^k b\|_{L^2(\Ri^d)} \leq r^{2M}_B \, |B|^{1/2-1/p}$, where $k = 0, 1, \ldots, M$.
\end{tabeleq}
\end{defi}

\begin{defi}
Let $f \in L^2(\Ri^d)$. 
If there exists a sequence $\{\lambda_j\} \in l^p$ such that 
\begin{equation} \label{atom decom 2}
f = \sum_{j \in \Ni} \lambda_j \, a_j
\end{equation}
in $L_2(\Ri^d)$, where each $a_j$ is a $(p,2,M)$-atom, then we say that \eqref{atom decom 2} is an atomic $(p,2,M)$-representation of $f$.
\end{defi}

Let $H_{L,at,M}^p(\Ri^d)$ be the completion of 
\[
\{f \in L_2(\Ri^d): f \mbox{ has an atomic } (p,2,M)-\mbox{representation} \}
\]
with respect to the quasi-norm 
\[
\|f\|_{H_{L,at,M}^p(\Ri^d)} := 
\inf \left\{ \left( \sum_{j \in \Ni} |\lambda_j|^p \right)^{1/p}: f = \sum_{j \in \Ni} \lambda_j \, a_j \mbox{ is an atomic } (p,2,M)-\mbox{representation} \right\}.
\]

It turns out that certain functions in $H_A^p(\Ri^d)$ can be decomposed into $(p,2,M)$-atoms.
Specifically, we will prove that $H_A^p(\Ri^d) \cap L^2(\Ri^d)$ and $H_{L,at,M}^p(\Ri^d)$ equal as quasi-norm spaces.

\begin{lemm} \label{Hat in HpA}
Let $M > \frac{d}{2}(\frac{1}{p} - \frac{1}{2})$.
Then
\[
\left( H_{L,at,M}^p(\Ri^d), \|\cdot\|_{H_{L,at,M}^1(\Ri^d)} \right)
\subset \left( H_A^p(\Ri^d) \cap L^2(\Ri^d), \|\cdot\|_{H_A^p(\Ri^d)} \right).
\]
\end{lemm}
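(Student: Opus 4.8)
The plan is to reduce the claimed inclusion to a single uniform estimate on atoms. Since $A$ is sublinear (by Minkowski's inequality in the Hilbert space $L^2(\Gamma(x), t^{1-d}\,dy\,dt)$ one has $A(f+g)\le Af+Ag$ pointwise) and $p\le 1$, the $p$-triangle inequality for $\|\cdot\|_{L^p}^p$ gives, for any atomic $(p,2,M)$-representation $f=\sum_j\lambda_j a_j$, the bound $\|Af\|_{L^p}^p\le\sum_j|\lambda_j|^p\,\|Aa_j\|_{L^p}^p$. Hence it suffices to produce a constant $C$, independent of the atom, such that $\|Aa\|_{L^p(\Ri^d)}\le C$ for every $(p,2,M)$-atom $a$; taking the infimum over all representations of $f$ then yields $\|f\|_{H_A^p}\le C\,\|f\|_{H_{L,at,M}^p}$, while $f\in L^2$ is automatic from the $L^2$-convergence of the representation. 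The whole argument therefore concentrates on the single-atom estimate.

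Fix a $(p,2,M)$-atom $a=L^M b$ associated with a ball $B=B(x_B,r_B)$. I first record two consequences of condition (iii): taking $k=M$ and using $(r_B^2 L)^M b=r_B^{2M}a$ gives $\|a\|_{L^2}\le |B|^{1/2-1/p}$, while taking $k=0$ gives $\|b\|_{L^2}\le r_B^{2M}\,|B|^{1/2-1/p}$. Decompose $\Ri^d=4B\cup\bigcup_{j\ge 2}S_j$ with $S_j=2^{j+1}B\setminus 2^jB$. On the local ball I combine Hölder's inequality (with exponent $2/p\ge 1$) and the $L^2$-boundedness of $A$ from Proposition \ref{A L2 bounded}:
\[
\int_{4B}(Aa)^p\le |4B|^{1-p/2}\Big(\int_{\Ri^d}(Aa)^2\Big)^{p/2}\le C\,|B|^{1-p/2}\,\|a\|_{L^2}^p\le C\,|B|^{1-p/2}\,|B|^{p/2-1}=C,
\]
which settles the local part uniformly.

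The heart of the matter is the far part, where the cancellation encoded in $a=L^M b$ must be turned into decay in $j$. By Fubini, using that the slice $\{x\in S_j:|x-y|<t\}$ has measure $\lesssim t^d$ and is nonempty only when $\mathrm{dist}(y,S_j)<t$, one reduces $\int_{S_j}(Aa)^2$ to $\int_0^\infty\|t\,\D_t P_t a\|_{L^2(F_{j,t})}^2\,\frac{dt}{t}$ over the sets $F_{j,t}=\{y:\mathrm{dist}(y,S_j)<t\}$. I split the $t$-integral at the atom scale $r_B$. For large $t\gtrsim r_B$ I write, via the functional calculus, $t\,\D_t P_t a=-t\,L^{M+1/2}e^{-t\sqrt L}b=-t^{-2M}\varphi(t\sqrt L)\,b$ with $\varphi(\lambda)=\lambda^{2M+1}e^{-\lambda}$ bounded on $[0,\infty)$; since $\varphi(t\sqrt L)$ is uniformly $L^2$-bounded and $\|b\|_{L^2}\le r_B^{2M}|B|^{1/2-1/p}$, this yields $\|t\,\D_t P_t a\|_{L^2}\le C\,(r_B/t)^{2M}|B|^{1/2-1/p}$. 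I use this together with the spatial separation between $B$ and $F_{j,t}$ (through the Poisson-kernel decay) in the intermediate range $r_B\lesssim t\lesssim 2^j r_B$, and the bound alone for $t\gtrsim 2^j r_B$, where $F_{j,t}$ may reach near $B$; in each case the large-$t$ integral is $\le C\,2^{-4Mj}\,\|a\|_{L^2}^2$, the binding contribution coming from $t\gtrsim 2^j r_B$. For small $t\lesssim r_B$ the geometry forces $|y-z|\sim 2^j r_B\gg t$ for all $z\in B$, so the pointwise Poisson-kernel bounds of Proposition \ref{Poisson kernel} (and their $t$-derivative) together with $\supp a\subset B$ give genuine spatial decay, again integrating to a negative power of $2^j$. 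Collecting both regimes gives $\int_{S_j}(Aa)^2\le C\,2^{-j\sigma}\,\|a\|_{L^2}^2$ for a $\sigma>0$; a final application of Hölder on $S_j$, with $|S_j|\sim 2^{jd}|B|$, turns this into $\int_{S_j}(Aa)^p\le C\,2^{-j\varepsilon}$ with $\varepsilon>0$ exactly when $\sigma>2d(\tfrac1p-\tfrac12)$, and the large-$t$ regime supplies $\sigma=4M$, so $\varepsilon>0$ precisely under the hypothesis $M>\tfrac d2(\tfrac1p-\tfrac12)$. Summing the geometric series over $j\ge 2$ then gives $\|Aa\|_{L^p}^p\le C$.

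The main obstacle is the far-field estimate of the previous paragraph: one must choose the split scale correctly (at the atom radius $r_B$, not at the distance $2^j r_B$) and track the interplay of the powers of $t$, $r_B$ and $2^j r_B$ so that each $t$-regime contributes a genuinely negative power of $2^j$. It is precisely the competition between the decay $(r_B/t)^{2M}$ extracted from the $M$-fold factorisation $a=L^M b$ and the volume growth $|S_j|\sim 2^{jd}|B|$ entering the Hölder step that forces the quantitative hypothesis $M>\tfrac d2(\tfrac1p-\tfrac12)$. The reduction to atoms, the local estimate and the summation over annuli are all routine once this estimate is in place.
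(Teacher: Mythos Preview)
Your reduction to a uniform atom bound, the local estimate via H\"older and Proposition~\ref{A L2 bounded}, and the final bookkeeping showing that $\sigma=4M$ beats $2d(\tfrac1p-\tfrac12)$ are all exactly as in the paper. The gap is in the far-field $L^2$ estimate itself.

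First, the split scale. You insist on splitting the $t$-integral at the atom radius $r_B$ (``not at the distance $2^j r_B$''), but this is the opposite of what works cleanly. On your intermediate range $r_B\lesssim t\lesssim 2^j r_B$ the functional-calculus bound alone gives $\int_{r_B}^{2^j r_B}(r_B/t)^{4M}\,\tfrac{dt}{t}\sim 1$, with \emph{no} decay in $j$; to salvage it you must combine the factor $(r_B/t)^{2M}$ with a quantitative off-diagonal bound for $\varphi(t\sqrt L)$ acting from $B$ to $F_{j,t}$. But ``Poisson-kernel decay'' from Proposition~\ref{Poisson kernel} concerns only $p_t(x,y)$, not the kernel of $\varphi(t\sqrt L)$, so this step is unjustified as written. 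The paper splits instead at $|x-x_0|/4\sim 2^{j}r$: for $t\gtrsim 2^{j}r$ the uniform $L^2$ bound on $(t\sqrt L)^{1+2M}e^{-t\sqrt L}$ together with $\|b\|_{L^2}\le r^{2M}|B|^{1/2-1/p}$ already yields $2^{-4Mj}$ after integrating $t^{-1-4M}$, while for $t\lesssim 2^{j}r$ one invokes a single $L^2$ off-diagonal (Davies--Gaffney type) estimate, namely \cite[Proposition~3.1]{HLMMY}, which gives
\[
\big\|(t\sqrt L)^{1+2M}P_t b\big\|_{L^2(F_j)}\le C\Big(\frac{t}{d(F_j,B)}\Big)^{2M+1}\|b\|_{L^2(B)}.
\]
This is the missing tool; it is applied directly to the factorised form $a=L^M b$ on the whole small-$t$ range and produces the same $2^{-4Mj}$ after integration.

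Second, for your small-$t$ piece ($t\lesssim r_B$) you appeal to ``the pointwise Poisson-kernel bounds \ldots\ and their $t$-derivative'', but Proposition~\ref{Poisson kernel} gives no bound on $\partial_t p_t$. Even granting such a bound, using $a$ (rather than $b$) and pointwise kernel decay yields only a fixed power like $2^{-j(d+2)}$, which is insufficient for small $p$ since you need $\sigma>2d(\tfrac1p-\tfrac12)$; the $M$-dependence must enter through the factorisation, and the clean way to do that is again the off-diagonal $L^2$ estimate above. In short: keep your skeleton, but split at the distance scale $\sim 2^{j}r_B$ and replace the appeal to pointwise Poisson bounds by the Davies--Gaffney estimate of \cite[Proposition~3.1]{HLMMY}.
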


\begin{proof}
Let $f \in H_{L,at,M}^p(\Ri^d)$.
We need $Af \in L^p(\Ri^d)$.
But $f = \sum_{j \in \Ni} \lambda_j \, a_j$, where $\{\lambda_j\} \in l^p$ and each $a_j$ is a $(p,2,M)$-atom.
Therefore it suffices to show that there exists a $C > 0$ such that $\|Aa\|_{L^p(\Ri^d)} \leq C$ for all $(p,2,M)$-atom $a$.

Let $a$ be a $(p,2,M)$-atom.
Let $x_0 \in \Ri^d$ and $r > 0$ be such that $\supp a \subset B(x_0,r) =: B$.
By a generalized Holder's inequality (cf.\ \cite[Corollary 2.5]{AF}), we have
\begin{equation} \label{Aa estimate}
\|Aa\|_{L^p(\Ri^d)} \leq C \, \sum_{j \in \Ni} |2^{j} B|^{1/q} \, \|Aa\|_{L^2(U_j)}
\end{equation}
for some $C > 0$, where $1/q = 1/p - 1/2$, $U_0 = B$ and $U_j = 2^{j} B \setminus 2^{j-1} B$ for $j \in \{1, 2, 3, \ldots\}$.
Since $A$ is bounded on $L^2(\Ri^d)$ by Proposition \ref{A L2 bounded}, we deduce that
\begin{equation} \label{Aa estimate 2}
\|Aa\|_{L^2(U_j)} \leq C \, \|a\|_{L^2(B)} \leq C \, |B|^{1/2-1/p}
\end{equation}
for some $C > 0$ and for all $j = 0,1,2$, where the last step follows from the bounded property given in the definition of a $(p,2,M)$-atom.

Let $j \geq 3$ and $b \in D(L^M)$ be such that $a = L^M b$.
Then
\begin{eqnarray}
\|A a\|^2_{L^2(U_j)}
&=& \int_{U_j} \int_0^\infty \int_{|x-y|<t} \left|t \, \big(\sqrt{L} \, P_t a \big)(y) \right|^2 \, \frac{dy \, dt}{t^{d+1}} \, dx
\nonumber
\\
&=& \int_{U_j} \int_0^\infty \int_{|x-y|<t} \left|\Big( (t\sqrt{L})^{1+2M} \, P_t b \Big)(y) \right|^2 \, \frac{dy \, dt}{t^{d+1+4M}} \, dx
\nonumber
\\
&=& \int_{U_j} \Big( \int_0^{|x-x_0|/4} + \int_{|x-x_0|/4}^\infty \Big) \int_{|x-y|<t} \left|\Big( (t\sqrt{L})^{1+2M} \, P_t b \Big)(y) \right|^2 \, \frac{dy \, dt}{t^{d+1+4M}} \, dx
\nonumber
\\
&=:& (I) + (II).
\label{Aa I&II}
\end{eqnarray}

We now estimate each term separately.
For (I), set
\[
F_j = \{ y \in \Ri^d: |x-y| \leq \frac{|x-x_0|}{4} \mbox{ for some } x \in U_j \}.
\]
If $z \in B$, $y \in F_j$ and $x \in U_j$ is such that $|x-y| \leq |x-x_0|/4$, then
\[
|y-z| \geq |x-x_0| - |x-y| - |z-x_0| \geq \frac{3}{4}|x-x_0| - r \geq \frac{|x-x_0|}{2} \geq 2^{j-2}r.
\]
We deduce that $d(F_j,B) \geq 2^{j-2}r$.
Therefore
\begin{eqnarray}
(I) 
&\leq& C \int_0^{2^{j-2}r} \int_{F_j} \left| \Big( (t\sqrt{L})^{1+2M} \, P_t b \Big)(y) \right|^2 \, \frac{dy \, dt}{t^{1+4M}}
= C \int_0^{2^{j-2}r} \left\| (t\sqrt{L})^{1+2M} \, P_t b \right\|^2_{L^2(F_j)} \, \frac{dt}{t^{1+4M}}
\nonumber
\\
&\leq& C \, \|b\|^2_{L^2(B)} \int_0^{2^{j-2}r} \Big( \frac{t}{d(F_j,B)} \Big)^{4M+2} \, \frac{dt}{t^{1+4M}}
\leq C r^{4M} |B|^{1-2/p} \int_0^{2^{j-2}r} \Big( \frac{t}{2^j r} \Big)^{4M+2} \, \frac{dt}{t^{1+4M}}
\nonumber
\\
&=& C \, |B|^{1-2/p} \, 2^{-4Mj}
= C \, |2^j B|^{1-2/p} \, 2^{-j(d(1-2/p) + 4M)},
\label{I}
\end{eqnarray}
where we used \cite[Proposition 3.1]{HLMMY} in the third step.
For (II), we have
\begin{eqnarray}
(II) 
&\leq& C \int_{2^{j-3}r}^\infty \int_{\Ri^d} \left| \Big( (t\sqrt{L})^{1+2M} \, P_t b \Big)(y) \right|^2 \, \frac{dy \, dt}{t^{1+4M}}
\leq C \, \|b\|^2_{L^2(B)} \int_{2^{j-3}r}^\infty \frac{dt}{t^{1+4M}}
\nonumber
\\
&\leq& C r^{4M} |B|^{1-2/p} \int_{2^{j-3}r}^\infty \frac{dt}{t^{1+4M}}
\leq C \, |B|^{1-2/p} \, 2^{-4Mj}
= C \, |2^j B|^{1-2/p} \, 2^{-j(d(1-2/p) + 4M)}.
\label{II}
\end{eqnarray}
It follows from \eqref{Aa estimate}, \eqref{Aa estimate 2}, \eqref{Aa I&II}, \eqref{I} and \eqref{II} that $\|Aa\|_{L^p(\Ri^d)} \leq C$.
Hence the claim follows.
\end{proof}

Next we will show that the reverse inclusion $H_{L,at,M}^p(\Ri^d) \supset H_A^p(\Ri^d) \cap L^2(\Ri^d)$ holds.
This requires some techniques from tent spaces.
Therefore we will diverge a little to study tent spaces.
The aim is to make use of the atomic decomposition already available in tent spaces (cf.\ \cite{Rus}) to study the space $H_A^p(\Ri^d) \cap L^2(\Ri^d)$ in terms of $(p,2,M)$-atoms defined above.

First we define tent spaces.
Let $\alpha > 0$.
For any closed subset $F \subset \Ri^d$, let 
\[
\mathsf{R}_\alpha(F) := \bigcup_{x \in F} \Gamma_\alpha(x),
\]
where 
\begin{equation} \label{Gamma_al}
\Gamma_\alpha(x) := \{ (y,t) \in \Ri^d \times (0,\infty): |x-y| < \alpha t \}, \quad x \in \Ri^d
\end{equation}
Note that $\Gamma_1$ agrees with $\Gamma$ defined in \eqref{Gamma1}.
If $O \subset \Ri^d$ is open then we define
\[
T_\alpha(O) 
:= (\mathsf{R}_\alpha(O^C))^C 
= \{(x,t) \in \Ri^d \times (0, \infty): d(x,O^C) \geq \alpha t\},
\]
which is called the tent over $O$ with aperture $\alpha$.
For short, we will write $\mathsf{R}(F)$ and $T(O)$ in place of $\mathsf{R}_1(O)$ and $T_1(O)$ respectively.

For each measurable function $f$ on $\Ri^d \times (0,\infty)$ and $x \in \Ri^d$, define
\[
(\mathsf{S}f)(x) = \left( \int_{\Gamma(x)\times(0,\infty)} |f(y,t)|^2 \, dy \, \frac{dt}{t^{d+1}} \right)^{1/2}
\]
We say that $f \in T^p_2(\Ri^d)$ if 
\[
\|f\|_{T^p_2(\Ri^d)} := \|\mathsf{S}f\|_{L^p(\Ri^d)} < \infty.
\]

\begin{defi}
A measurable function a on $\Ri^d × (0, \infty)$ is said to be a $T^p_2(\Ri^d)$-atom if there exists a ball $B \subset \Ri^d$ such that $a$ is supported in $T(B)$ and 
\[
\int_{\Ri^d \times (0,\infty)} |a(y,t)|^2 \, dy \, \frac{dt}{t}
\leq |B|^{1-2/p}.
\]
\end{defi}

In what follows, we let $\phi \in C_c^\infty(\Ri)$ be such that
\begin{tabeleq}
\item $\supp\phi \subset (-1,1)$ 
\item $\phi$ is even,
\item $\phi \geq 0$ on $(-1,1)$ and $\phi > 0$ on $(-1/2,1/2)$.
\end{tabeleq}
For each $M \geq 1$, we set $\Psi(x) := x^{2(M+1)} \, \Phi(x)$, where $\Phi$ is the Fourier transform of $\phi$ and $x \in \Ri$.
Consider the operator $\pi_{\Psi,L}: T^2_2(\Ri^d) \longrightarrow L^2(\Ri^d)$ given by
\[
\pi_{\Psi,L}(F) := \int_0^\infty \Psi(t\sqrt{L}) \, F(\cdot,t) \, \frac{dt}{t}.
\]
It is known that the improper integral converges weakly in $L^2(\Ri^d)$ and
\[
\|\pi_{\Psi,L}(F)\|_{L^2(\Ri^d)} \leq C_M \, \|F\|_{T^2_2(\Ri^d)}
\]
for each $M \geq 1$ (cf.\ \cite[p.23]{HLMMY}).

\begin{lemm} \label{atom transform}
Let $B$ be a ball in $\Ri^d$ and $F$ a $T^p_2(\Ri^d)$-atom associated with $B$.
Let $M \geq 1$.
Then there exists a $C_M > 0$ such that $C_M^{-1} \pi_{\Psi,L}(F)$ is a $(p,2,M)$-atom associated with $2B$.
\end{lemm}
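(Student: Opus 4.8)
The plan is to verify that, after normalisation, $a:=\pi_{\Psi,L}(F)$ meets the three defining requirements of a $(p,2,M)$-atom associated with $2B$. I would begin with the factorisation. Writing $\Psi(x)=x^{2(M+1)}\Phi(x)$ and setting $\Theta(x):=x^2\Phi(x)$, the functional calculus gives $\Psi(t\sqrt{L})=L^M\big(t^{2M}\,\Theta(t\sqrt{L})\big)$, so it is natural to define
\[
b:=\int_0^\infty t^{2M}\,\Theta(t\sqrt{L})\,F(\cdot,t)\,\frac{dt}{t},
\]
whence formally $a=L^Mb$. Using the identity $L^k\Theta(t\sqrt{L})=t^{-2k}\Psi_k(t\sqrt{L})$ with $\Psi_k(x):=x^{2(k+1)}\Phi(x)$, one gets for every $k\in\{0,1,\dots,M\}$
\[
L^kb=\int_0^\infty \Psi_k(t\sqrt{L})\,t^{2(M-k)}\,F(\cdot,t)\,\frac{dt}{t}.
\]
The interchange of $L^k$ with the improper integral, and the membership $b\in D(L^M)$, follow once each $L^kb$ is shown to lie in $L^2(\Ri^d)$, which is exactly the norm bound proved below.

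Next I would treat the support condition. Since $\phi\in C_c^\infty(\Ri)$ is even with $\supp\phi\subset(-1,1)$, each $\Psi_k$ is the Fourier transform of an even function still supported in $(-1,1)$ (differentiation preserves the support), so by Fourier inversion $\Psi_k(t\sqrt{L})$ is a superposition of the wave propagators $\cos(st\sqrt{L})$ with $|s|\le 1$. The Gaussian heat-kernel bounds established in Section~\ref{pre} yield Davies--Gaffney estimates, hence the finite propagation speed of $\cos(st\sqrt{L})$ (see \cite{HLMMY}); consequently the kernel of $\Psi_k(t\sqrt{L})$ is supported in $\{(x,z):|x-z|\le t\}$. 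As $F$ is supported in the tent $T(B)$, every $(z,t)\in\supp F$ obeys $B(z,t)\subset B$, and combined with $|x-z|\le t$ this forces $x\in\overline{B}$. Therefore $\supp L^kb\subset\overline{B}\subset 2B$ for all $k$.

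For the norm condition, fix $k\le M$ and pair $L^kb$ against $g\in L^2(\Ri^d)$ with $\|g\|_{L^2}=1$. Since $\Psi_k(t\sqrt{L})$ is self-adjoint,
\[
\big|\langle L^kb,g\rangle\big|\le\int_0^\infty\!\int_{\Ri^d} t^{2(M-k)}\,|F(z,t)|\,\big|\Psi_k(t\sqrt{L})g(z)\big|\,dz\,\frac{dt}{t}.
\]
On $\supp F\subset T(B)$ one has $t\le r_B$, so $t^{2(M-k)}\le r_B^{2(M-k)}$; applying Cauchy--Schwarz in $dz\,\frac{dt}{t}$, the atom bound $\int\!\int|F|^2\,dz\,\frac{dt}{t}\le|B|^{1-2/p}$, and the quadratic estimate of Lemma~\ref{aux lemm} (valid because $|\Psi_k(x)|\lesssim x^{2(k+1)}/(1+x^{4(k+1)})$ as $\Phi$ is Schwartz) gives $\|L^kb\|_{L^2}\le C\,r_B^{2(M-k)}\,|B|^{1/2-1/p}$. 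Multiplying by $(r_{2B}^2)^k=(2r_B)^{2k}$ and absorbing the dimensional constants relating $r_{2B},|2B|$ to $r_B,|B|$ yields $\|(r_{2B}^2L)^kb\|_{L^2}\le C_M\,r_{2B}^{2M}\,|2B|^{1/2-1/p}$, so $C_M^{-1}a$ is a genuine $(p,2,M)$-atom.

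I expect the support step to be the main obstacle: one must rigorously reduce $\Psi_k(t\sqrt{L})$ to the wave propagator and invoke finite propagation speed, while simultaneously justifying the factorisation $a=L^Mb$ and the domain membership $b\in D(L^M)$ through the weakly convergent improper integral. The norm estimate is, by comparison, a routine Cauchy--Schwarz computation resting on the already-established quadratic estimate.
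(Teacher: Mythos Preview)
Your proposal is correct and follows essentially the same route as the paper: the same factorisation $a=L^Mb$ with $b=\int_0^\infty t^{2M}(t^2L)\Phi(t\sqrt{L})F(\cdot,t)\,\frac{dt}{t}$, the same support argument via finite propagation speed of $\cos(s\sqrt{L})$ (which the paper merely asserts by citing that $F$ is supported in $T(B)$), and the same duality/Cauchy--Schwarz estimate combined with Lemma~\ref{aux lemm} for the norm bound. Your write-up is in fact more careful than the paper's on the points you flagged as potential obstacles---the reduction to the wave propagator and the domain membership $b\in D(L^M)$---both of which the paper leaves implicit.
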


\begin{proof}
By definition, we have
\begin{equation} \label{T-atom bound}
\int_{\Ri^d \times (0,\infty)} |F(x,t)|^2 \, dx \, \frac{dt}{t} \leq |B|^{1-2/p}.
\end{equation}
Let $a := \pi_{\Psi,L}(F) = L^M b$, where
\[
b := \int_0^\infty t^{2M} \, t^2 \, L\Phi(-t\sqrt{L}) \, \big( F(\cdot,t) \big) \, \frac{dt}{t}.
\]
Observe that the functions $L^k b$ are all supported on $2B$ for $k = 0, 1, \ldots, M$ as $F$ is supported in $T(B)$.

Next let $g \in L^2(2B)$ and $k \in \{0, 1, \ldots, M\}$.
Let $r$ be the radius of $B$.
Then
\begin{eqnarray*}
\left| \int_{\Ri^d} (r^2 L)^k b g \right|
&=& \left| \lim_{\delta \to 0} \int_{\Ri^d} \left( \int_\delta^{1/\delta} t^{2M} r^{2k} L^k t^2 L\Phi(-t\sqrt{L}) \, \big( F(\cdot, t) \big)(x) \frac{dt}{t} \right) g(x) \, dx \right|
\\
&=& \left| \int_{T(B)} F(x,t) t^{2M} r^{2k} L^k t^2 L\Phi(-t\sqrt{L}) \, g(x) \, dx \, \frac{dt}{t} \right|
\\
&\leq& r^{2M} \, \left( \int_{\Ri^d\times(0,\infty)} \big| F(x,t) \big|^2 \, dx \, \frac{dt}{t} \right)^{1/2} \left( \int_{T(B)} \big| (t^2 L)^{k+1} \Phi(-t\sqrt{L}) \, g(x) \big|^2 \, dx \, \frac{dt}{t} \right)^{1/2}
\\
&\leq& r^{2M} \, |B|^{1/2-1/p} \, \|g\|_{L^2(2B)}.
\end{eqnarray*} 
where the last step follows from \eqref{T-atom bound}, Lemma \ref{aux lemm} and the fact that $k \leq M$.
Consequently
\[
\|(r^2 L)^k b\|_{L^2(2B)} \leq C r^2 \, |B|^{1/2-1/p}
\]
for all $k = 0, 1, \ldots, M$.
The claim now follows.
\end{proof}

\begin{lemm} \label{HpA in Hat}
Let $M \geq 1$.
Then
\[
\left( H_{L,at,M}^p(\Ri^d), \|\cdot\|_{H_{L,at,M}^1(\Ri^d)} \right)
\supset \left( H_A^p(\Ri^d) \cap L^2(\Ri^d), \|\cdot\|_{H_A^p(\Ri^d)} \right).
\]
\end{lemm}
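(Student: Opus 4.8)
The plan is to translate the statement into the language of tent spaces and invoke the atomic decomposition available there, then transport that decomposition back to $L^2(\Ri^d)$ through the operator $\pi_{\Psi,L}$. Let $f \in H_A^p(\Ri^d) \cap L^2(\Ri^d)$ and set $F(y,t) := t\,(\D_t P_t f)(y) = -\big(t\sqrt{L}\,e^{-t\sqrt{L}} f\big)(y)$. A direct comparison of the two integrals shows $Af = \mathsf{S}F$, so that $\|F\|_{T_2^p(\Ri^d)} = \|Af\|_{L^p(\Ri^d)} = \|f\|_{H_A^p(\Ri^d)} < \infty$; moreover, since $A$ is bounded on $L^2(\Ri^d)$ by Proposition \ref{A L2 bounded}, we also have $\|F\|_{T_2^2(\Ri^d)} = \|Af\|_{L^2(\Ri^d)} \leq C\,\|f\|_{L^2(\Ri^d)} < \infty$. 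Thus $F \in T_2^p(\Ri^d) \cap T_2^2(\Ri^d)$.

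Next I would apply the atomic decomposition for tent spaces (cf.\ \cite{Rus}): there are $T_2^p(\Ri^d)$-atoms $A_j$, each supported in a tent $T(B_j)$, and scalars $\{\lambda_j\} \in l^p$ with $\sum_j |\lambda_j|^p \leq C\,\|F\|_{T_2^p(\Ri^d)}^p$, such that $F = \sum_j \lambda_j A_j$ with convergence in both $T_2^p(\Ri^d)$ and $T_2^2(\Ri^d)$. Applying the operator $\pi_{\Psi,L}$, which is bounded from $T_2^2(\Ri^d)$ into $L^2(\Ri^d)$, gives $\pi_{\Psi,L}(F) = \sum_j \lambda_j\, \pi_{\Psi,L}(A_j)$ in $L^2(\Ri^d)$. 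By Lemma \ref{atom transform} each $C_M^{-1}\pi_{\Psi,L}(A_j)$ is a $(p,2,M)$-atom associated with $2B_j$. On the other hand, a Calder\'on reproducing formula should identify $\pi_{\Psi,L}(F)$ with a nonzero multiple of $f$: by the spectral theorem and the scaling substitution $s = t\sqrt{\lambda}$,
\[
\pi_{\Psi,L}(F) = -\int_0^\infty \Psi(t\sqrt{L})\,\big(t\sqrt{L}\,e^{-t\sqrt{L}}\big) f\,\frac{dt}{t} = c_\Psi\, f, \qquad c_\Psi := -\int_0^\infty \Psi(s)\, s\,e^{-s}\,\frac{ds}{s},
\]
the constant $c_\Psi$ being finite (as $\Phi$ is Schwartz) and nonzero by the defining conditions on $\phi$. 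Consequently $f = c_\Psi^{-1}\sum_j \lambda_j\,\pi_{\Psi,L}(A_j) = \sum_j \big(c_\Psi^{-1} C_M \lambda_j\big)\,\big(C_M^{-1}\pi_{\Psi,L}(A_j)\big)$ is an atomic $(p,2,M)$-representation of $f$ converging in $L^2(\Ri^d)$, and since $\sum_j |c_\Psi^{-1} C_M \lambda_j|^p \leq C\sum_j |\lambda_j|^p \leq C\,\|f\|_{H_A^p(\Ri^d)}^p$, I would conclude $f \in H_{L,at,M}^p(\Ri^d)$ with $\|f\|_{H_{L,at,M}^p(\Ri^d)} \leq C\,\|f\|_{H_A^p(\Ri^d)}$, which is the desired inclusion.

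The main obstacle will be the rigorous justification of the reproducing formula displayed above. First, one must show that the improper integral $\int_0^\infty \Psi(t\sqrt{L})\big(t\sqrt{L}\,e^{-t\sqrt{L}}\big)f\,\frac{dt}{t}$ converges (say, weakly in $L^2(\Ri^d)$) and may be evaluated through the spectral calculus of $\sqrt{L}$, using the decay of $\Psi(s)\,s\,e^{-s}$ at both endpoints together with the fact that the spectrum of $\sqrt{L}$ is bounded away from $0$ (so that $L$ is injective and the constant is genuinely independent of the spectral parameter); here Lemma \ref{aux lemm} is the natural tool for the attendant $L^2$ estimates. Second, one has to confirm $c_\Psi \neq 0$ from the hypotheses on $\phi$; this is the one place where the positivity conditions on $\phi$ enter, and it is needed so that $f$ can actually be recovered from $F$ rather than merely dominated by it. A secondary point to verify is that the tent-space decomposition converges in $T_2^2(\Ri^d)$ and not only in $T_2^p(\Ri^d)$, since it is this $L^2$-level convergence that legitimizes applying $\pi_{\Psi,L}$ term by term and that matches the $L^2$-convergence required in the definition of the atomic space $H_{L,at,M}^p(\Ri^d)$.
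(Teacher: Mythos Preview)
Your proposal is correct and follows essentially the same route as the paper's own proof: set $F(\cdot,t)=t\sqrt{L}\,e^{-t\sqrt{L}}f$, observe $F\in T^p_2\cap T^2_2$, invoke the tent-space atomic decomposition of \cite{Rus}, push the atoms through $\pi_{\Psi,L}$ using Lemma~\ref{atom transform}, and recover $f$ via the Calder\'on reproducing formula. If anything you are more explicit than the paper about the subsidiary points (nonvanishing of $c_\Psi$, $T^2_2$-convergence needed to apply $\pi_{\Psi,L}$ termwise), which the paper treats as understood.
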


\begin{proof}
Let $f \in H_A^p(\Ri^d)$.
Set $F(\cdot,t) = t\sqrt{L}e^{-t\sqrt{L}}f$.
Then $F \in T^2_2(\Ri^d) \cap T^p_2(\Ri^d)$ by Lemma \ref{aux lemm} and the definition of $H_A^p(\Ri^d)$.
It follows from \cite[Theorem 1.1]{Rus} that $F = \sum_{j \in \Ni} \lambda_j A_j$, where $A_j$'s are $T^p_2(\Ri^d)$-atoms, $\{\lambda_j\} \in l^p$ and 
\begin{equation} \label{lamda norm}
\Big( \sum_{j \in \Ni} |\lambda_j|^p \Big)^{1/p} \leq C \, \|F\|_{T^p_2(\Ri^d)} = C \, \|f\|_{H_A^p(\Ri^d)}.
\end{equation}
Using the Calderon reproducing formula, we obtain
\begin{equation} \label{atom decomp via tent}
f 
= c_\Psi \int_0^\infty \Psi(t\sqrt{L}) \, \big( t\sqrt{L} e^{-t\sqrt{L}} f \big) \, \frac{dt}{t}
= c_\Psi \, \pi_{\Psi,L}(F)
= c_\Psi \, \sum_{j \in \Ni} \lambda_j \, \pi_{\Psi,L}(A_j)
\end{equation}
in $L_2(\Ri^d)$.
Note that $a_j := \pi_{\Psi,L}(A_j)$ is a $(p,2,M)$-atom for $M \geq 1$ and $j \in \Ni$ by Lemma \ref{atom transform}.
Therefore \eqref{atom decomp via tent} is an atomic $(p,2,M)$-representation and hence $f \in H_{L,at,M}^p(\Ri^d)$.
Moreover, 
\[
\|f\|_{H_{L,at,M}^p(\Ri^d)} = \inf \left( \sum_{j \in \Ni} |\lambda_j|^p \right)^{1/p}  \leq C \, \|f\|_{H_A^p(\Ri^d)}
\]
by \eqref{lamda norm}.
\end{proof}

\begin{prop} \label{Hat = HpA}
Let $M > \frac{d}{2}(\frac{1}{p} - \frac{1}{2}) \vee 1$.
We have
\[
\left( H^p_A(\Ri^d) \cap L^2(\Ri^d), \|\cdot\|_{H^p_A(\Ri^d)} \right) 
= \left( H_{L,at,M}^p(\Ri^d), \|\cdot\|_{H_{L,at,M}^1(\Ri^d)} \right).
\]
\end{prop}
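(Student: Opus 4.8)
The plan is to derive this equality immediately from the two inclusions established just above, Lemma \ref{Hat in HpA} and Lemma \ref{HpA in Hat}, so that the proof reduces to verifying that both lemmas apply under the stated hypothesis and then reading off the two-sided quasi-norm control. First I would dispose of the hypothesis on $M$: the condition $M > \frac{d}{2}(\frac{1}{p} - \frac{1}{2}) \vee 1$ forces both $M > \frac{d}{2}(\frac{1}{p} - \frac{1}{2})$, which is exactly the hypothesis of Lemma \ref{Hat in HpA}, and $M > 1$, which subsumes the requirement $M \geq 1$ of Lemma \ref{HpA in Hat}. Hence both lemmas are simultaneously available.

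With both lemmas in hand, I would combine them directly. Lemma \ref{Hat in HpA} gives the inclusion $H_{L,at,M}^p(\Ri^d) \subset H_A^p(\Ri^d) \cap L^2(\Ri^d)$ together with a constant $C > 0$ for which $\|f\|_{H_A^p(\Ri^d)} \leq C \, \|f\|_{H_{L,at,M}^p(\Ri^d)}$, while Lemma \ref{HpA in Hat} supplies the reverse inclusion and the companion estimate $\|f\|_{H_{L,at,M}^p(\Ri^d)} \leq C \, \|f\|_{H_A^p(\Ri^d)}$, the latter being precisely the bound concluding the proof of that lemma. The two inclusions yield equality of the underlying sets, and the two quasi-norm inequalities together yield equivalence of the quasi-norms.

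Since both $H_A^p(\Ri^d) \cap L^2(\Ri^d)$ and $H_{L,at,M}^p(\Ri^d)$ are defined as completions of subspaces of $L^2(\Ri^d)$ under these quasi-norms, the equivalence on the common dense class of $L^2(\Ri^d)$-functions on which both quasi-norms are finite lifts to an identification of the two completions, which is the assertion. I expect no genuine obstacle at this stage: the substantive analysis---the atomic estimate for $\|Aa\|_{L^p}$ in Lemma \ref{Hat in HpA} and the tent-space atomic decomposition through the operator $\pi_{\Psi,L}$ in Lemma \ref{HpA in Hat}---has already been carried out. The only point meriting a moment's care is to confirm that the norm equivalence, established on the dense class, genuinely descends to the two completions; this is routine once one observes that both completions are built from subsets of $L^2(\Ri^d)$ on which both quasi-norms are finite.
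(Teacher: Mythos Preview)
Your proposal is correct and mirrors the paper's own proof, which simply reads ``This follows from Lemmas \ref{Hat in HpA} and \ref{HpA in Hat}.'' Your additional remarks on unpacking the hypothesis $M > \frac{d}{2}(\frac{1}{p} - \frac{1}{2}) \vee 1$ and on the passage to completions are sound elaborations, but the core argument is identical.
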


\begin{proof}
This follows from Lemmas \ref{Hat in HpA} and \ref{HpA in Hat}.
\end{proof}

\begin{prop} \label{Hat = HpL}
Let $M \geq \frac{d}{2}(\frac{1}{p}-1)$.
We have
\[
\left( H^p_L(\Ri^d), \|\cdot\|_{H^p_L(\Ri^d)} \right) 
= \left( H_{L,at,M}^p(\Ri^d), \|\cdot\|_{H_{L,at,M}^1(\Ri^d)} \right).
\]
\end{prop}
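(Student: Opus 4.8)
The plan is to prove the two inclusions separately, leaning on the atomic descriptions already at our disposal. I would first fix $M$ large enough that Proposition \ref{Hat = HpA} applies, so that the target identity $H_L^p(\Ri^d) = H_{L,at,M}^p(\Ri^d)$ becomes equivalent to $H_L^p(\Ri^d) = H_A^p(\Ri^d) \cap L^2(\Ri^d)$; the whole point is then to match the heat maximal function defining $H_L^p(\Ri^d)$ against the Poisson square function $A$. The case of a general admissible $M$ follows afterwards from the $M$-independence of $H_{L,at,M}^p(\Ri^d)$.

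For the inclusion $H_{L,at,M}^p(\Ri^d) \subset H_L^p(\Ri^d)$, by the $p$-subadditivity of $\|\cdot\|_{H_L^p}$ and completeness it suffices to exhibit a constant $C$ with $\|\mathcal{M}_L a\|_{L^p} \le C$ for every $(p,2,M)$-atom $a$. Writing $a = L^M b$ and splitting $\Ri^d$ into the annuli $U_0 = 2B$ and $U_j = 2^j B \setminus 2^{j-1} B$ as in Lemma \ref{Hat in HpA}, I would treat the near annuli $j \le 2$ by the generalized H\"older inequality of \cite[Corollary 2.5]{AF} together with the $L^2$-boundedness of $\mathcal{M}_L$ (valid since $T$ is a symmetric, positivity-preserving contraction semigroup, by Stein's maximal theorem) and the size bound $\|a\|_{L^2} \le |B|^{1/2-1/p}$. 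On the far annuli I would use the factorization $e^{-tL} a = t^{-M}\,(tL)^M e^{-tL} b$ together with the Gaussian and off-diagonal estimates for $(tL)^M e^{-tL}$ coming from \eqref{kernel lipschitz}: for $x \in U_j$ and $|x-y|<t$ one gains a Gaussian factor forcing $|x-z| \gtrsim 2^j r_B$ for $z$ in $B$, and after taking the supremum over $t>0$ and inserting $\|(r_B^2 L)^k b\|_{L^2} \le r_B^{2M}|B|^{1/2-1/p}$ one arrives at $\|\mathcal{M}_L a\|_{L^p(U_j)} \le C\,2^{-j\gamma}$ with $\gamma>0$ precisely when $M \ge \frac{d}{2}\big(\frac1p-1\big)$. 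Summing the geometric series gives the uniform bound.

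For the reverse inclusion $H_L^p(\Ri^d) \subset H_{L,at,M}^p(\Ri^d)$ I would invoke Proposition \ref{atom decom} to reduce to classical $H_L^p$-atoms: such an atom $a$ lies in $L^2(\Ri^d)$, so by Proposition \ref{Hat = HpA} it is enough to check $\|Aa\|_{L^p} \le C$ uniformly, i.e. that $a \in H_A^p(\Ri^d)$. This is the exact square-function analogue of Proposition \ref{HpL subset HpLmap}: over a fixed dilate of $B$ I would use the $L^2$-boundedness of $A$ from Proposition \ref{A L2 bounded} and H\"older, while over $(2B)^C$ I would use the moment condition of $a$ together with the regularity of the Poisson kernel and its $t$-derivative (the smoothness feeding \eqref{poisson kernel lipschitz}) to produce the pointwise decay $Aa(x) \le C\,|B|^{1+1/d-1/p}\,|x-y_0|^{-(d+1)}$, which is $L^p$-integrable off $2B$ exactly as in the computation closing the proof of Proposition \ref{HpL subset HpLmap}.

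I expect the main obstacle to be the two far-field estimates, where cancellation must be converted into summable decay: in the first inclusion one must extract enough decay from the factorization $a = L^M b$ \emph{uniformly} in the supremum over $t>0$, which is what pins down the lower bound on $M$; in the second one must control the square function of a classical atom through the $t$-derivative bounds of the Poisson kernel rather than the heat semigroup used to define $H_L^p(\Ri^d)$. The passage from atoms to the full spaces, namely norm control under the $\ell^p$ quasi-norm and convergence of the representations in $L^2(\Ri^d)$, is routine and can be dispatched by the usual completeness and density arguments.
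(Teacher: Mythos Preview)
The paper's own proof is a one-line citation: it simply invokes \cite[Theorem 1.4]{SY} and \cite[Theorem 2.15]{BDL}, which already establish, for non-negative self-adjoint operators with Gaussian heat kernel bounds, the coincidence of the heat-maximal Hardy space with the $(p,2,M)$-atomic space in the stated range of $M$. Your proposal instead attempts a self-contained proof, routing through Proposition~\ref{Hat = HpA} and the classical-atom decomposition of Proposition~\ref{atom decom}; this is a genuinely different strategy.

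Your outline is workable in broad strokes, but three points need attention. First, the inclusion $H_L^p \subset H_{L,at,M}^p$ via Proposition~\ref{Hat = HpA} only applies for $M > \frac{d}{2}(\frac1p-\frac12)\vee 1$; your appeal to ``$M$-independence'' for smaller $M$ is really the monotonicity $H_{L,at,M'}^p \subset H_{L,at,M}^p$ when $M'>M$ (a $(p,2,M')$-atom $a=L^{M'}b$ is a $(p,2,M)$-atom with $\tilde b=L^{M'-M}b$), which you should state explicitly. Second, the far-field bound $Aa(x)\le C|B|^{1+1/d-1/p}|x-y_0|^{-(d+1)}$ for a classical $H_L^p$-atom requires H\"older regularity in the second variable of the kernel of $t\partial_t P_t=-t\sqrt{L}e^{-t\sqrt{L}}$, which is \emph{not} among the estimates recorded in the paper (Proposition~\ref{Poisson kernel} treats only $p_t$ itself); you would have to derive this from the subordination formula. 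Third, your route inherits the restriction $\frac{d}{d+\delta}<p\le 1$ from Proposition~\ref{atom decom}, whereas the theorems in \cite{SY} and \cite{BDL} deliver the full range. The payoff of your approach is a more self-contained argument built from the paper's own tools; the paper's approach buys the result cheaply and in full generality by deferring to the literature.
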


\begin{proof}
This follows from \cite[Theorem 1.4]{SY} and \cite[Theorem 2.15]{BDL}.
\end{proof}

\begin{prop} \label{HpL = HpA}
We have
\[
\left( H^p_L(\Ri^d), \|\cdot\|_{H^p_L(\Ri^d)} \right) 
= \left( H_A^p(\Ri^d) \cap L^2(\Ri^d), \|\cdot\|_{H_{A}^p(\Ri^d)} \right).
\]
\end{prop}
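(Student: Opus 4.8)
The plan is to derive this identification by composing the two characterisations of the atomic space $H_{L,at,M}^p(\Ri^d)$ that have already been established. Proposition \ref{Hat = HpA} identifies $H_A^p(\Ri^d) \cap L^2(\Ri^d)$ with $H_{L,at,M}^p(\Ri^d)$ as quasi-normed spaces whenever $M > \frac{d}{2}(\frac{1}{p} - \frac{1}{2}) \vee 1$, while Proposition \ref{Hat = HpL} identifies $H_L^p(\Ri^d)$ with the same atomic space whenever $M \geq \frac{d}{2}(\frac{1}{p}-1)$. The strategy is therefore to fix a single exponent $M$ for which both identifications are valid and then chain them by transitivity.

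The one point to verify is that the two ranges of admissible $M$ overlap. Since $p \in (0,1]$ we have $\frac{1}{p} \geq 1$, hence $\frac{1}{p} - \frac{1}{2} \geq \frac{1}{p} - 1$ and therefore $\frac{d}{2}(\frac{1}{p} - \frac{1}{2}) \geq \frac{d}{2}(\frac{1}{p} - 1)$. Consequently any integer $M$ satisfying $M > \frac{d}{2}(\frac{1}{p} - \frac{1}{2}) \vee 1$ automatically satisfies $M \geq \frac{d}{2}(\frac{1}{p} - 1)$, so both Proposition \ref{Hat = HpA} and Proposition \ref{Hat = HpL} apply for this common choice of $M$.

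Fixing such an $M$, I would then combine the two equalities to obtain
\[
\left( H^p_L(\Ri^d), \|\cdot\|_{H^p_L(\Ri^d)} \right)
= \left( H_{L,at,M}^p(\Ri^d), \|\cdot\|_{H_{L,at,M}^p(\Ri^d)} \right)
= \left( H_A^p(\Ri^d) \cap L^2(\Ri^d), \|\cdot\|_{H_A^p(\Ri^d)} \right),
\]
which is precisely the claim. As each of these is an equality of quasi-normed spaces with norms equivalent up to multiplicative constants depending only on $d$, $p$ and $M$, the composite is again an equivalence of quasi-norms. I expect no genuine obstacle here: the substantive analytic work is entirely contained in Propositions \ref{Hat = HpA} and \ref{Hat = HpL}, and the present proof reduces to checking the compatibility of the constraints on $M$ and invoking transitivity.
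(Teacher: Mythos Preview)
Your proposal is correct and follows exactly the paper's approach: the paper's proof is the single line ``The claim is a consequence of Propositions \ref{Hat = HpA} and \ref{Hat = HpL}.'' Your additional explicit verification that the two constraints on $M$ are compatible is a welcome detail that the paper leaves implicit.
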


\begin{proof}
The claim is a consequence of Propositions \ref{Hat = HpA} and \ref{Hat = HpL}.
\end{proof}

\section{$H_L^p = H_S^p$} \label{S}

Let $\alpha > 0$.
For each $x \in \Ri^d$, define 
\[
\Gamma_\alpha^{\varepsilon,R}(x) := \{ (y,t) \in \Ri^d \times (\varepsilon,R): |x-y| < \alpha t \},
\]
where $0 < \varepsilon < R < \infty$.

For each $x \in \Ri^d$, define 
\[
S_\alpha f(x) = S_{\alpha,L} f(x)
:= \left( \int_{\Gamma_\alpha(x)} t^2 \, |\nabla_L P_t f(y)|^2 \frac{dy \, dt}{t^{d+1}} \right)^{1/2}
= \left( \int_{\Gamma_\alpha(x)} t^{1-d} \, |\nabla_L P_t f(y)|^2 \, dy \, dt \right)^{1/2}
\]
and
\[
S_\alpha^{\varepsilon,R} f(x) = S_{\alpha,L}^{\varepsilon,R} f(x) 
:= \left( \int_{\Gamma_\alpha^{\varepsilon,R}(x)} t^2 \, |\nabla_L P_t f(y)|^2 \frac{dy \, dt}{t^{d+1}} \right)^{1/2}
= \left( \int_{\Gamma_\alpha^{\varepsilon,R}(x)} t^{1-d} \, |\nabla_L P_t f(y)|^2 \, dy \, dt \right)^{1/2},
\]
where $\Gamma_\alpha(x)$ is defined by \eqref{Gamma_al} and $\nabla_L = (\D_t, \D_1 + x_1, \ldots, \D_d + x_d)$.

Note that $S_1$ coincides with $S$ defined in \eqref{Sop}.
Let $H_S^p(\Ri^d)$ be the completion of
\[
\big\{f \in L^2(\Ri^d): Sf \in L^p(\Ri^d) \big\}
\]
under the quasi-norm
\[
\|\cdot\|_{H_S^p(\Ri^d)} = \|S\cdot\|_{L^p(\Ri^d)}.
\]

\begin{lemm}[{\cite[Lemma 3]{Jiz}}] \label{S<f*}
Let $0 \leq \alpha <1$.
Then there is a $C_\alpha$ such that
\[
S_\alpha^{\varepsilon,R} f(x) \leq C_\alpha \, \big( 1 + |\ln(R/\varepsilon)| \big)^{1/2} \, f_L^*(x)
\]
for all $f \in L_2(\Ri^d)$.
\end{lemm}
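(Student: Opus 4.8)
The plan is to exploit the fact that $u(y,t) := P_t f(y) = (e^{-t\sqrt{L}}f)(y)$ is, for $t>0$, a smooth solution of the equation $\D_t^2 u = Lu$, i.e.\
\[
\D_t^2 u + \Delta_y u - |y|^2 u = 0
\qquad \text{on } \Ri^d \times (0,\infty),
\]
which merely restates $\D_t^2 P_t = L P_t$. Since $\alpha < 1$, every point of the truncated cone $\Gamma_\alpha^{\varepsilon,R}(x)$ lies strictly inside the aperture-$1$ cone $\Gamma_1(x) = \{(z,s): |x-z| < s\}$, on which $|u| \le f_L^*(x)$ by the very definition of the nontangential maximal function. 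The idea is to control the local $L^2$-norm of the full gradient $\nabla_{y,t} u$ together with the potential contribution $|y|^2|u|^2$ by a Caccioppoli inequality, to dominate $|\nabla_L u|^2$ by these quantities, and then to sum over dyadic scales in $t$; the number of scales meeting $(\varepsilon,R)$ is $O\big(1 + |\ln(R/\varepsilon)|\big)$, which is exactly where the logarithmic factor comes from.

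First I would record the pointwise comparison $|\nabla_L u|^2 \le 2|\nabla_{y,t}u|^2 + 2|y|^2|u|^2$, a consequence of $|(\D_j + y_j)u|^2 \le 2|\D_j u|^2 + 2 y_j^2 |u|^2$. Next, fix $x$, choose $\alpha < \alpha' < 1$, and for each integer $k$ with $[2^k, 2^{k+1}) \cap (\varepsilon, R) \ne \emptyset$ introduce a cutoff $\eta_k(y,t) = \psi_1(t/2^k)\,\psi_2(|x-y|/t)$, where $\psi_1$ is supported in $(1/2, 4)$ and equals $1$ on $[1,2]$, and $\psi_2$ is supported in $[0,\alpha')$ and equals $1$ on $[0,\alpha]$. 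Then $\eta_k \equiv 1$ on the annular piece $A_k := \{(y,t) \in \Gamma_\alpha^{\varepsilon,R}(x): 2^k \le t < 2^{k+1}\}$, while $\supp\eta_k \subset \Gamma_{\alpha'}(x) \cap \{2^{k-1} < t < 2^{k+2}\} \subset \Gamma_1(x)$; moreover $|\supp\eta_k| \lesssim 2^{k(d+1)}$ and $|\nabla_{y,t}\eta_k| \lesssim 2^{-k}$.

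The core step is the Caccioppoli estimate: testing the equation against $\eta_k^2 \bar u$, integrating by parts, and absorbing the gradient term yields
\[
\int \eta_k^2 \big( |\nabla_{y,t} u|^2 + |y|^2 |u|^2 \big) \le C \int |\nabla_{y,t}\eta_k|^2 \, |u|^2 .
\]
Since $|u| \le f_L^*(x)$ on $\supp\eta_k \subset \Gamma_1(x)$, the right-hand side is at most $C\, 2^{-2k}\, 2^{k(d+1)}\,(f_L^*(x))^2 = C\, 2^{k(d-1)} (f_L^*(x))^2$. Combining this with the pointwise comparison and with $\eta_k \equiv 1$ on $A_k$ gives $\int_{A_k} |\nabla_L u|^2 \le C\, 2^{k(d-1)} (f_L^*(x))^2$, whence $\int_{A_k} t^{1-d} |\nabla_L u|^2 \le C\,(f_L^*(x))^2$ because $t \sim 2^k$ on $A_k$ and $1 - d < 0$. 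Summing over the $O\big(1 + |\ln(R/\varepsilon)|\big)$ relevant indices $k$ and taking square roots then yields the claim.

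I expect the main obstacle to be the Caccioppoli step in the presence of the confining potential $|y|^2$: one must verify that the extension equation holds classically in the interior (guaranteed by the smoothing/analyticity of the Hermite Poisson semigroup on $L^2$), justify the integration by parts with a cutoff compactly supported in the open half-space $t > 0$ so that no boundary terms at $t = \varepsilon$ or $t = R$ arise—which is precisely why I use a smooth dyadic cutoff in $t$ rather than a sharp truncation—and keep careful track of the aperture so that each enlarged support $\supp\eta_k$ stays inside $\Gamma_1(x)$. It is exactly the hypothesis $\alpha < 1$ that secures the gap needed to place these supports inside the aperture-$1$ cone, and thus makes the whole scheme work.
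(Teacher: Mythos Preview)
Your argument is correct. The extension identity $\partial_t^2 P_t f = L P_t f$, the pointwise bound $|\nabla_L u|^2 \le 2|\nabla_{y,t}u|^2 + 2|y|^2|u|^2$, the Caccioppoli inequality obtained by testing against $\eta_k^2\bar u$, and the dyadic summation in $t$ all go through exactly as you outline; the hypothesis $\alpha<1$ is used precisely where you say, to keep $\supp\eta_k\subset\Gamma_{\alpha'}(x)\subset\Gamma_1(x)$ so that $|u|\le f_L^*(x)$ there.

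As for the comparison: the present paper gives no proof of this lemma at all---it is simply quoted from \cite[Lemma~3]{Jiz}. The argument in \cite{Jiz} is essentially the same Caccioppoli-plus-dyadic scheme you have written (this is the classical route for such pointwise bounds of truncated area integrals by the nontangential maximal function, going back to the Laplacian case in Stein). So your proposal matches both the paper's treatment (by citation) and the underlying source.
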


Next we define
\[
\widetilde{S}_\alpha^{\varepsilon,R} f(x) 
:= \left( \int_1^2 \int_{\Gamma_{\alpha/a}^{a\varepsilon,aR}(x)} t^{1-d} \, |\nabla_L P_t f(y)|^2 \, dy \, dt \, da \right)^{1/2},
\]
Simple estimation gives
\begin{equation} \label{S equiv}
S_{\alpha/2}^{2\varepsilon, R} f(x)
\leq \widetilde{S}_\alpha^{\varepsilon,R} f(x) 
\leq S_{2\alpha}^{\varepsilon, 2R} f(x).
\end{equation}

\begin{lemm} \label{size S}
There exists a $C > 0$ such that
\[
\big| \{ x \in \Ri^d: \widetilde{S}_{1/20}^{\varepsilon,R} f(x) > 2\lambda \mbox{ and } f_L^*(x) \leq \gamma \, \lambda \} \big| 
\leq 
C \, \gamma^2 \big| \{ x \in \Ri^d: \widetilde{S}_{1/2}^{\varepsilon,R} f(x) > \lambda \big|
\]
for all $0 < \gamma < 1$, $\lambda > 0$, $0 < \varepsilon < R < \infty$ and $f \in H^p_{L,\max}(\Ri^d) \cap L^2(\Ri^d)$.
\end{lemm}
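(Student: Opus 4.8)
The plan is to run a Fefferman--Stein good-$\lambda$ argument based on a Whitney decomposition of a superlevel set of the wider square function. Fix $0<\varepsilon<R<\infty$, $\lambda>0$ and $0<\gamma<1$, and set
\[
O_\lambda := \big\{ x\in\Ri^d : \widetilde{S}_{1/2}^{\varepsilon,R} f(x) > \lambda \big\}.
\]
Because the averaging in $a$ and the truncation to $t\in(a\varepsilon,aR)$ render $x\mapsto\widetilde{S}_{1/2}^{\varepsilon,R}f(x)$ continuous, the set $O_\lambda$ is open; and since $f\in L^2(\Ri^d)$ and the square function is $L^2$-bounded by the argument of Proposition \ref{A L2 bounded} together with Lemma \ref{aux lemm}, we have $|O_\lambda|\leq\lambda^{-2}\|\widetilde{S}_{1/2}^{\varepsilon,R}f\|_{L^2}^2<\infty$, so $O_\lambda\neq\Ri^d$. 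First I would note that aperture monotonicity gives $\widetilde{S}_{1/20}^{\varepsilon,R}f\leq\widetilde{S}_{1/2}^{\varepsilon,R}f$, whence the set on the left-hand side is contained in $O_\lambda$. I would then take a Whitney decomposition $O_\lambda=\bigcup_k B_k$ into balls of radius $r_k$ with bounded overlap and $\mathrm{dist}(B_k,O_\lambda^c)\sim r_k$, and reduce the claim to the localised bound
\[
|E_k| \leq C\,\gamma^2\,|B_k|, \qquad E_k := \big\{ x\in B_k : \widetilde{S}_{1/20}^{\varepsilon,R}f(x)>2\lambda,\ f_L^*(x)\leq\gamma\lambda \big\},
\]
which, summed against the bounded overlap, yields the lemma. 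We may assume $E_k\neq\emptyset$, for otherwise there is nothing to prove.

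The next step is to split each cone at a height comparable to $r_k$. For $x\in B_k$ pick a point $z_k\in O_\lambda^c$ with $|x-z_k|\leq Cr_k$. A direct computation shows that, once the splitting height $c\,r_k$ is chosen large enough (depending only on the Whitney constant), the portion of $\Gamma_{1/(20a)}^{a\varepsilon,aR}(x)$ with $t\geq c\,r_k$ is contained in $\Gamma_{1/(2a)}^{a\varepsilon,aR}(z_k)$ for every $a\in[1,2]$; hence the ``top'' part of $\widetilde{S}_{1/20}^{\varepsilon,R}f(x)^2$ is at most $\widetilde{S}_{1/2}^{\varepsilon,R}f(z_k)^2\leq\lambda^2$. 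Consequently, on $E_k$ the ``bottom'' part (the integral over $t<c\,r_k$) exceeds $4\lambda^2-\lambda^2=3\lambda^2$. By Chebyshev and Fubini,
\[
3\lambda^2\,|E_k| \leq \int_{E_k}\int_1^2\int_{\Gamma_{1/(20a)}^{a\varepsilon,aR}(x),\,t<c r_k} t^{1-d}|\nabla_L P_t f(y)|^2\,dy\,dt\,da\,dx \leq C\int_0^{c r_k}\int_{\widetilde{B}_k} t\,|\nabla_L P_t f(y)|^2\,dy\,dt,
\]
where $\widetilde{B}_k$ is a fixed dilate of $B_k$ and we used that for $(y,t)$ in the local region the measure of $\{x\in B_k:|x-y|<t/(20a)\}$ is $\lesssim t^d$. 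Crucially, whenever this measure is positive there is an $x\in E_k$ with $(y,t)\in\Gamma_{1/(20a)}(x)\subset\Gamma_1(x)$, so $|P_t f(y)|\leq f_L^*(x)\leq\gamma\lambda$; thus on the support of the integrand one has the pointwise bound $|P_t f(y)|\leq\gamma\lambda$.

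The final and hardest step is the local energy estimate
\[
\int_0^{c r_k}\int_{\widetilde{B}_k} t\,|\nabla_L P_t f(y)|^2\,dy\,dt \leq C\,\gamma^2\lambda^2\,|B_k|.
\]
I would obtain this from a Green's-type (Littlewood--Paley) identity for the Hermite--Poisson extension $u(y,t)=P_t f(y)$, which solves $\partial_t^2 u = Lu$: integrating by parts over the cylinder $\widetilde{B}_k\times(0,c r_k)$ converts the energy $\int\!\int t\,|\nabla_L u|^2$ into boundary and lower-order terms controlled by $\sup|u|^2$ over the region, which is $\leq(\gamma\lambda)^2$ by the previous paragraph, times $|\widetilde{B}_k|\sim|B_k|$. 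The genuine difficulty, and the place where the Hermite setting departs from the classical harmonic case, is that $\nabla_L$ carries the first-order terms $\partial_{y_j}+y_j$, so the integration by parts generates cross-terms and a potential contribution involving $|y|^2|u|^2$. These are handled by working at the bounded scale of the Whitney ball, on which Proposition \ref{rho} shows that $\rho$ is essentially constant, together with the Poisson kernel bounds of Proposition \ref{Poisson kernel}, which ensure that the extra terms are of lower order and can be absorbed; this is the step requiring the most care. Combining the three steps and summing over $k$ using the bounded overlap of the Whitney balls yields the asserted inequality.
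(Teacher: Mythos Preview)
The paper does not supply an argument for this lemma; its proof consists of a single sentence referring to \cite[Lemma~4]{Jiz}. Your proposal reproduces exactly the good-$\lambda$/Whitney/local-energy scheme that \cite{Jiz} (following Stein) carries out, so at the level of strategy you are aligned with the cited source. The first two steps---Whitney decomposition of $O_\lambda$, splitting the truncated cones at height $\sim r_k$ so that the ``top'' is absorbed by a nearby point of $O_\lambda^c$, and the Chebyshev--Fubini reduction to a local integral $\int_0^{cr_k}\int_{\widetilde B_k} t\,|\nabla_L P_tf|^2$ with the pointwise bound $|P_tf|\le\gamma\lambda$ on the relevant region---are correct and match the standard template.

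The place where your sketch does not yet close is the local energy estimate. Appealing to Proposition~\ref{rho} and the kernel bounds of Proposition~\ref{Poisson kernel} is not the mechanism that controls the Hermite-specific terms; neither statement gives you any grip on the cross terms produced by $(\partial_j+y_j)$. What is actually used in \cite{Jiz} is the algebraic identity, valid for $u=P_tf$ solving $\partial_t^2u=Lu$,
\[
|\nabla_L u|^2 \;=\; \tfrac12\,(\partial_t^2+\Delta_y)(u^2)\;+\;\mathrm{div}_y\bigl(y\,u^2\bigr)\;-\;d\,u^2,
\]
obtained from $\sum_j|(\partial_j+y_j)u|^2=|\nabla u|^2+|y|^2u^2+y\!\cdot\!\nabla(u^2)$ together with the equation. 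After multiplying by $t$ and integrating over the truncated region, the term $-d\,u^2$ has the favourable sign; the term $\tfrac12(\partial_t^2+\Delta_y)(u^2)$ yields boundary contributions, and it is precisely here that the extra averaging in $a\in[1,2]$ built into $\widetilde S$ is used to turn the top/bottom boundary evaluations (which a priori involve $\partial_t u$) into solid integrals of $u^2$; finally $\mathrm{div}_y(y\,u^2)$ produces a lateral boundary term. That last piece is the delicate one and is \emph{not} dispatched by ``$\rho$ is essentially constant on the Whitney ball''---indeed, integrating naively over a cylinder $\widetilde B_k\times(0,cr_k)$ leaves a factor $r_k\,|x_k|$ that need not be bounded. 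In the actual execution one integrates over the saw-tooth region (as in Stein) rather than a cylinder, so that the lateral boundary is slanted and its contribution is again controlled by $\sup|u|^2\cdot|B_k|$. Your outline should be rewritten around this identity and the saw-tooth Green's formula, not around Propositions~\ref{rho} and~\ref{Poisson kernel}.
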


\begin{proof}
The proof follows verbatim to that of \cite[Lemma 4]{Jiz}.
\end{proof}

\begin{lemm} \label{S equiv 2}
Let $\alpha, \beta > 0$ and $0 < \varepsilon < R < \infty$.
Then
\[
\|S_\alpha^{\varepsilon,R}\|_{L_p(\Ri^d)} \sim \|S_\beta^{\varepsilon,R}\|_{L_p(\Ri^d)},
\]
where the implicit constants are independent of $\varepsilon$, $R$ and $f$.
\end{lemm}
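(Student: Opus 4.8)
I would prove the two $L^p$ quasi-norms are comparable by establishing the one genuinely nontrivial inequality, namely $\|S_\beta^{\varepsilon,R}f\|_{L^p}\le C\|S_\alpha^{\varepsilon,R}f\|_{L^p}$ whenever $\beta>\alpha$, with $C$ depending only on $d$, $p$ and the ratio $\beta/\alpha$. The reverse comes for free: if $\alpha\le\beta$ then $\Gamma_\alpha^{\varepsilon,R}(x)\subset\Gamma_\beta^{\varepsilon,R}(x)$, so $S_\alpha^{\varepsilon,R}f(x)\le S_\beta^{\varepsilon,R}f(x)$ pointwise and hence in $L^p$. Thus everything reduces to dominating a \emph{wide} cone by a \emph{narrow} one, and we may assume $\|S_\alpha^{\varepsilon,R}f\|_{L^p}<\infty$. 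The engine is a Coifman--Meyer--Stein good-$\lambda$ argument run against the positive measure $d\nu(y,t):=|\nabla_L P_t f(y)|^2\,t^{1-d}\,\mathbf{1}_{(\varepsilon,R)}(t)\,dy\,dt$ on $\Ri^d\times(0,\infty)$. With this notation $\big(S_\gamma^{\varepsilon,R}f(x)\big)^2=\nu(\Gamma_\gamma(x))$ for every aperture $\gamma$, and Tonelli gives the basic identity $\int_E\big(S_\gamma^{\varepsilon,R}f\big)^2=\int |E\cap B(y,\gamma t)|\,d\nu(y,t)$ for measurable $E\subset\Ri^d$. The decisive structural point is that the truncation $\varepsilon<t<R$ is inert: it sits only inside $\nu$, whereas all the geometry below takes place in the $x$-variable at fixed height $t$. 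This is exactly what keeps the final constants independent of $\varepsilon$ and $R$.

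Fix $\sigma>0$, set $\lambda=\sigma/2$, write $g_\gamma:=S_\gamma^{\varepsilon,R}f$, and let $O_\lambda:=\{g_\alpha>\lambda\}$. Let $\mathsf{M}$ be the (centred) Hardy--Littlewood maximal operator and choose a threshold $c_0=c_0(d,\beta/\alpha)\in(0,1)$ so small that $c_0\big((\alpha+\beta)/\alpha\big)^d\le\tfrac12$; put $E^*:=\{\mathsf{M}\mathbf{1}_{O_\lambda}\le c_0\}$. The key geometric claim is that whenever $B(y,\beta t)$ meets $E^*$ one has $|O_\lambda^{c}\cap B(y,\alpha t)|\ge\tfrac12|B(y,\alpha t)|$. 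Indeed, for $x_*\in E^*\cap B(y,\beta t)$ the inclusion $B(y,\alpha t)\subset B(x_*,(\alpha+\beta)t)$ together with the density bound $|O_\lambda\cap B(x_*,(\alpha+\beta)t)|\le c_0|B(x_*,(\alpha+\beta)t)|$ forces at least half of $B(y,\alpha t)$ into $O_\lambda^{c}=\{g_\alpha\le\lambda\}$. Consequently $|E^*\cap B(y,\beta t)|\le|B(y,\beta t)|=(\beta/\alpha)^d|B(y,\alpha t)|\le 2(\beta/\alpha)^d\,|O_\lambda^{c}\cap B(y,\alpha t)|$, and integrating this against $\nu$ through the basic identity yields $\int_{E^*}g_\beta^2\le 2(\beta/\alpha)^d\int_{O_\lambda^{c}}g_\alpha^2$.

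Combining the last bound with Chebyshev on $E^*$ and the weak-type $(1,1)$ estimate $|(E^*)^{c}|\le C c_0^{-1}|O_\lambda|$ gives the good-$\lambda$ inequality
\[
\big|\{g_\beta>\sigma\}\big|
\le \frac{C}{c_0}\,\big|\{g_\alpha>\sigma/2\}\big|
+\frac{C}{\sigma^{2}}\int_{\{g_\alpha\le\sigma/2\}} g_\alpha^{2}.
\]
Multiplying by $p\sigma^{p-1}$ and integrating over $\sigma\in(0,\infty)$, the first term contributes $C\|g_\alpha\|_{L^p}^p$, while for the second Tonelli and $\int_{2g_\alpha(x)}^{\infty}\sigma^{p-3}\,d\sigma=\tfrac{(2g_\alpha(x))^{p-2}}{2-p}$ (finite precisely because $p\le1<2$) convert it into $C_p\int g_\alpha^{p}$. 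This proves $\|S_\beta^{\varepsilon,R}f\|_{L^p}\le C\|S_\alpha^{\varepsilon,R}f\|_{L^p}$ with $C=C(d,p,\beta/\alpha)$ independent of $\varepsilon$, $R$ and $f$, which together with the trivial inequality yields the asserted equivalence.

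The step I expect to be the main obstacle is the geometric density claim in the second paragraph: calibrating the maximal-function threshold $c_0$ purely in terms of $d$ and $\beta/\alpha$ so that a wide cone vertexed at a low-density point of $O_\lambda$ is genuinely saturated by narrow cones over points where $g_\alpha$ is small. Everything downstream (Chebyshev, weak $(1,1)$, the layer-cake integration) is routine, with the only delicate quantitative input being the convergence of the $\sigma$-integral, which is where the hypothesis $p\le1<2$ is used; the uniformity in $\varepsilon$ and $R$ is automatic because those parameters never leave the measure $\nu$.
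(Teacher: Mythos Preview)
Your argument is correct and is precisely the Coifman--Meyer--Stein change-of-aperture good-$\lambda$ argument that the paper invokes by citing \cite[Proposition 4]{CMS}; the paper itself does not write the proof out but only remarks that it follows from that reference with obvious modifications. Your observation that the truncation parameters $\varepsilon,R$ sit only in the measure $\nu$ and never enter the geometric steps is exactly what justifies the ``obvious modifications'' and the uniformity claim.
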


\begin{proof}
The proof follows from that of \cite[Proposition 4]{CMS} with obvious modifications.
\end{proof}

\begin{prop} \label{S<f*}
Let $0 < \varepsilon < R < \infty$.
Let $f \in H^p_{L,\max}(\Ri^d)$ be such that $\widetilde{S}_{1/20}^{\varepsilon,R} f \in L^p(\Ri^d)$.
Then there is a $C > 0$ such that
\begin{equation} \label{S<f* in Lp norm}
\|Sf\|_{L^p(\Ri^d)} \leq C \, \|f_L^*\|_{L^p(\Ri^d)}.
\end{equation}
\end{prop}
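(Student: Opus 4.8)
The plan is to run a Fefferman--Stein good-$\lambda$ argument in which $f_L^*$ plays the role of the controlling maximal function and the truncated, aperture-averaged area integrals $\widetilde S$ are the quantities to be estimated. The truncation $(\varepsilon,R)$ is kept fixed throughout precisely so that the a priori finiteness needed for the final absorption is guaranteed by hypothesis, and only at the very end are the truncation parameters sent to their limits.

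First I would convert the good-$\lambda$ inequality of Lemma~\ref{size S} into an $L^p$ estimate. Writing $G=\widetilde S_{1/20}^{\varepsilon,R}f$, $H=\widetilde S_{1/2}^{\varepsilon,R}f$ and $F=f_L^*$, the inclusion $\{G>2\lambda\}\subset\{G>2\lambda,\ F\le\gamma\lambda\}\cup\{F>\gamma\lambda\}$ combined with Lemma~\ref{size S} gives $|\{G>2\lambda\}|\le C\gamma^2|\{H>\lambda\}|+|\{F>\gamma\lambda\}|$. Multiplying by $p\lambda^{p-1}$, integrating over $\lambda\in(0,\infty)$ and using the layer-cake identity $\|g\|_{L^p}^p=p\int_0^\infty\lambda^{p-1}|\{g>\lambda\}|\,d\lambda$ (valid for every $p>0$) yields
\[
\|G\|_{L^p}^p \le C\,\gamma^2\,\|H\|_{L^p}^p + C\,\gamma^{-p}\,\|F\|_{L^p}^p .
\]

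The key step is to bound the larger-aperture term $\|H\|_{L^p}=\|\widetilde S_{1/2}^{\varepsilon,R}f\|_{L^p}$ by the smaller-aperture term $\|G\|_{L^p}=\|\widetilde S_{1/20}^{\varepsilon,R}f\|_{L^p}$ with a constant independent of $\varepsilon$ and $R$; note that the reverse inequality $\|H\|_{L^p}\ge\|G\|_{L^p}$ is trivial since $\frac12>\frac1{20}$, so it is exactly this direction that carries the content. For this I would sandwich both quantities between genuine truncated area integrals $S_\beta^{\cdot,\cdot}f$ via \eqref{S equiv}, and then invoke the change-of-aperture equivalence of Lemma~\ref{S equiv 2}, whose constants do not depend on the truncation, to obtain $\|H\|_{L^p}\le C_1\|G\|_{L^p}$. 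Substituting into the display gives $\|G\|_{L^p}^p\le CC_1^p\gamma^2\|G\|_{L^p}^p+C\gamma^{-p}\|F\|_{L^p}^p$. Since $f\in H^p_{L,\max}(\Ri^d)$ forces $F=f_L^*\in L^p$, and since the hypothesis guarantees $\|G\|_{L^p}<\infty$ (this finiteness being exactly what the crude pointwise bound $S_\alpha^{\varepsilon,R}f\le C_\alpha(1+|\ln(R/\varepsilon)|)^{1/2}f_L^*$ supplies for fixed $\varepsilon,R$), I may choose $\gamma$ so small that $CC_1^p\gamma^2\le\frac12$ and absorb the first term on the right into the left, obtaining $\|\widetilde S_{1/20}^{\varepsilon,R}f\|_{L^p}\le C\|f_L^*\|_{L^p}$ with $C$ independent of $\varepsilon,R$.

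Finally I would pass back to $Sf=S_1f$. Using \eqref{S equiv} and Lemma~\ref{S equiv 2} once more to trade the aperture $\frac1{20}$ for aperture $1$, again uniformly in the truncation, I get $\|S_1^{\varepsilon,R}f\|_{L^p}\le C\|f_L^*\|_{L^p}$ with $C$ independent of $\varepsilon,R$; letting $\varepsilon\downarrow0$ and $R\uparrow\infty$, the truncation regions increase to $\Gamma(x)$, so monotone convergence yields $\|Sf\|_{L^p}=\sup_{\varepsilon,R}\|S_1^{\varepsilon,R}f\|_{L^p}\le C\|f_L^*\|_{L^p}$, which is \eqref{S<f* in Lp norm}. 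I expect the main obstacle to be the bookkeeping in the aperture and truncation comparisons: securing the reverse aperture inequality $\|\widetilde S_{1/2}^{\varepsilon,R}f\|_{L^p}\lesssim\|\widetilde S_{1/20}^{\varepsilon,R}f\|_{L^p}$ with a constant uniform in $(\varepsilon,R)$ is delicate because the two sides of \eqref{S equiv} live on the shifted truncations $(2\varepsilon,R)$ and $(\varepsilon,2R)$, and one must also check that the hypothesised finiteness genuinely licenses the absorption.
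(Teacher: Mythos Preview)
Your overall strategy---run a good-$\lambda$ argument based on Lemma~\ref{size S}, integrate in $\lambda$, absorb, and finally undo the truncation---is exactly the paper's, but two steps that you flag as bookkeeping are in fact the substance of the proof, and as written your argument does not go through.

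First, Lemma~\ref{size S} (and likewise the crude pointwise bound of Lemma~\ref{S<f*}) is stated only for $f\in H^p_{L,\max}(\Ri^d)\cap L^2(\Ri^d)$, whereas the proposition assumes merely $f\in H^p_{L,\max}(\Ri^d)$. You apply the lemma directly to $f$; the paper does not. It first regularises, setting $f_s:=P_sf$ and showing from $f_L^*\in L^p$ that $P_sf\in L^p\cap L^\infty\subset L^2$ for every $s>0$. The good-$\lambda$ machinery is then run for $f_s$, one uses $(f_s)_L^*\le f_L^*$, and only at the end is $s\to0$ taken via dominated convergence. Without this regularisation you have no right to invoke Lemma~\ref{size S}.

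Second, the inequality $\|\widetilde S_{1/2}^{\varepsilon,R}f\|_{L^p}\le C_1\|\widetilde S_{1/20}^{\varepsilon,R}f\|_{L^p}$ that you need for absorption simply does not follow from \eqref{S equiv} and Lemma~\ref{S equiv 2}: chasing through those two statements gives at best $\|\widetilde S_{1/2}^{\varepsilon,R}f\|_{L^p}\le C\|\widetilde S_{1/20}^{\varepsilon/2,2R}f\|_{L^p}$, which is larger than the left-hand side you want to absorb into. The paper's remedy is not to prove the clean aperture inequality you state, but to split
\[
\|S_1^{\varepsilon/2,2R}f_s\|_{L^p}\le C\big(\|S_1^{\varepsilon/2,\varepsilon}f_s\|_{L^p}+\|S_1^{\varepsilon,R}f_s\|_{L^p}+\|S_1^{R,2R}f_s\|_{L^p}\big)
\]
and to dispose of the two boundary pieces by the crude pointwise bound of Lemma~\ref{S<f*} (the ratios $R/\varepsilon$ there equal $2$, so no blow-up), leaving only $\|S_1^{\varepsilon,R}f_s\|_{L^p}$, which does embed back into $\|\widetilde S_{1/20}^{\varepsilon,R}f_s\|_{L^p}$ with matching truncations. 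You identify this as ``the main obstacle'' but do not resolve it; it is precisely where the argument lives.
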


\begin{proof}
Since $f \in H^p_{L,\max}(\Ri^d)$, we have $f_L^* \in L^p(\Ri^d)$.
This implies $P_s f \in L^p(\Ri^d)$ for all $s > 0$.
We deduce from the definition of $f_L^*$ that $|P_s f(x)| \leq f_L^*(y)$ for all $s > 0$ and for all $x, y \in \Ri^d$ such that $y \in B(x,s)$.
Therefore
\[
|P_s f(x)| \leq \frac{1}{|B(x,s)|} \int_{B(x,s)} f_L^* \leq \frac{C}{s^d} \int_{\Ri^d} f_L^* < \infty
\]
for some $C > 0$.
Hence $P_s f \in L^\infty(\Ri^d)$ for all $s > 0$.
Interpolation gives $P_s f \in L^2(\Ri^d)$ for all $s > 0$.

In what follows, we denote $f_s = P_s f$ for ease of notation.
By Lemma \ref{size S}, there is a $C > 0$ such that
\[
\big| \{ x \in \Ri^d: \widetilde{S}_{1/20}^{\varepsilon,R} f_s(x) > 2\lambda \mbox{ and } (f_s)_L^*(x) \leq \gamma \, \lambda \} \big| 
\leq 
C \, \gamma^2 \big| \{ x \in \Ri^d: \widetilde{S}_{1/2}^{\varepsilon,R} f_s(x) > \lambda \big|
\]
for all $0 < \gamma < 1$ and $\lambda > 0$.
Multiplying both sides by $\lambda^{p-1}$ and then integrating with respect to $\lambda$ give
\begin{equation} \label{S120}
\|\widetilde{S}_{1/20}^{\varepsilon,R} f_s\|_{L^p(\Ri^d)} 
\leq C \, \Big( \gamma^{-1} \, \|(f_s)_L^*\|_{L^p(\Ri^d)} + \gamma^2 \, \|\widetilde{S}_{1/2}^{\varepsilon,R} f_s\|_{L^p(\Ri^d)} \Big).
\end{equation}
It follows from \eqref{S equiv} and Lemma \ref{S equiv 2} that there exists a $C > 0$ such that
\[
\|S_1^{\varepsilon,R} f_s\|_{L^p(\Ri^d)} 
\leq C \, \|S_{1/40}^{\varepsilon, R} f_s\|_{L^p(\Ri^d)}
\leq C \, \|\widetilde{S}_{1/20}^{\varepsilon,R} f_s\|_{L^p(\Ri^d)}.
\]
Also notice that $\|u+v\|_{L^p(\Ri^d)} \leq 2^{(1-p)/p}(\|u\|_{L^p(\Ri^d)} + \|u\|_{L^p(\Ri^d)})$ when $0<p<1$, where $u,v \in \|u\|_{L^p(\Ri^d)}$.
Consequently, there exists a $C>0$ such that
\begin{eqnarray}
\|\widetilde{S}_{1/2}^{\varepsilon,R} f_s\|_{L^p(\Ri^d)}
&\leq& C \, \|S_1^{\varepsilon/2,2R} f_s\|_{L^p(\Ri^d)}
\nonumber
\\
&\leq& C \, \big( \|S_1^{\varepsilon/2,\varepsilon} f_s\|_{L^p(\Ri^d)}
		+ \|S_1^{\varepsilon,2R} f_s\|_{L^p(\Ri^d)} + \|S_1^{R,2R} f_s\|_{L^p(\Ri^d)} \big)
\nonumber
\\
&\leq& C \, \big( \|S_1^{\varepsilon, R} f_s\|_{L^p(\Ri^d)} + \|(f_s)_L^*\|_{L^p(\Ri^d)} \big)	
\nonumber
\\
&\leq& C \, \big( \|\widetilde{S}_{1/20}^{\varepsilon/2,2R} f_s\|_{L^p(\Ri^d)} + \|(f_s)_L^*\|_{L^p(\Ri^d)} \big),
\label{S12}
\end{eqnarray}
where we used Lemma \ref{S<f*} in the third step.
Substituting \eqref{S12} into \eqref{S120} and choosing an appropriate value for $\gamma$, we obtain \eqref{S<f* in Lp norm} for $f_s$.
In addition, we also have that
\[
(f_s)_L^*(x) 
= \sup_{|x-y|<t} \big| P_t f_s(y) \big| 
= \sup_{|x-y|<t} \big| P_{t+s} f(y) \big|
\leq \sup_{|x-y|<t+s} \big| P_{t+s} f(y) \big| = f_L^*(x)
\]
for all $x \in \Ri^d$.
Hence 
\[
\|Sf_s\|_{L^p(\Ri^d)} \leq C \, \|f_L^*\|_{L^p(\Ri^d)}.
\]
Finally we use Lebesgue dominated convergence theorem and take limit $s \longrightarrow 0$ to derive
\[
\|Sf\|_{L^p(\Ri^d)} \leq C \, \|f_L^*\|_{L^p(\Ri^d)}.
\]
This completes the proof.
\end{proof}

We are now ready to prove our main theorem.

\begin{proof}[Proof of Theorem \ref{main theorem}]
($\subset$) This follows from Propositions \ref{HpL subset HpLmap} and \ref{S<f*}.
($\supset$) This is a consequence of Proposition \ref{HpL = HpA} and the fact that $\|\cdot\|_{H_A^p(\Ri^d)} \leq \|\cdot\|_{H_S^p(\Ri^d)}$.
\end{proof}

\section{Boundedness of Riesz transforms} \label{Riesz}

In this section we prove Theorem \ref{Riesz bounded}.
We first consider some auxiliary results.

Define $H^p_{L+2}(\Ri^d)$ as the completion of 
\[
\{ f \in L^2(\Ri^d): \mathcal{M}_{L+2} f \in L^p(\Ri^d) \}
\]
under the quasi-norm
\[
\|\cdot\|_{H_{L+2}^p} = \|\mathcal{M}_{L+2}\cdot\|_{L^p}, 
\]
where $\mathcal{M}_{L+2} f(\cdot) = \sup_{t > 0} |e^{-(L+2)t} f(\cdot)|$ for all $f \in L^2(\Ri^d)$.

Recall the two operators $A$ and $S$ associated with $L$ considered in the previous sections.
In this section we will also consider the operator $L+2$.
Clearly the previous results applied to $L+2$.
To make notation clear, we will write $A_L$ and $S_L$ to emphasize $A$ and $S$ are associated with $L$.
Similarly we can also define $A_{L+2}$ and $S_{L+2}$ associated with $L+2$.

In what follows, we let $L^2_c(\Ri^d)$ be the space of functions in $L^2(\Ri^d)$ with compact supports.

\begin{lemm} \label{L2c}
The following inclusions hold:
\[
\left(L^2_c(\Ri^d), \|\cdot\|_{L^2} \right) 
\subset \left(H^p_L(\Ri^d), \|\cdot\|_{H^p_L} \right)
\subset \left( H^p_{L+2}(\Ri^d), \|\cdot\|_{H^p_{L+2}(\Ri^d)} \right).
\]
Moreover, $L^2_c(\Ri^d)$ is dense in both $H^p_L(\Ri^d)$ and $H^p_{L+2}(\Ri^d)$.
\end{lemm}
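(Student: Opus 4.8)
The plan is to establish the three assertions in turn: the inclusion $L^2_c(\Ri^d)\subset H^p_L(\Ri^d)$, the inclusion $H^p_L(\Ri^d)\subset H^p_{L+2}(\Ri^d)$, and finally the density of $L^2_c(\Ri^d)$ in both spaces. The second inclusion is immediate: since the semigroup generated by $L+2$ satisfies $e^{-t(L+2)}=e^{-2t}\,T_t$, we have $\mathcal{M}_{L+2}f(x)=\sup_{t>0}e^{-2t}|T_tf(x)|\le \mathcal{M}_Lf(x)$ pointwise, whence $\|f\|_{H^p_{L+2}}\le\|f\|_{H^p_L}$; this passes to the completions and gives $H^p_L(\Ri^d)\subset H^p_{L+2}(\Ri^d)$.

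For the first inclusion I would fix $f\in L^2_c(\Ri^d)$ with $\supp f\subset B:=B(x_0,r)$ and estimate $\|\mathcal{M}_Lf\|_{L^p}^p$ by splitting $\Ri^d=2B\cup(2B)^C$. On $2B$, the Gaussian upper bound for $g_t$ (item (i) of the heat-kernel proposition in Section \ref{pre}) dominates $T_t$ by an $L^1$-normalised approximate identity, so $\mathcal{M}_Lf\le C\,\mathcal{M}_{\mathrm{HL}}f$ with $\mathcal{M}_{\mathrm{HL}}$ the Hardy--Littlewood maximal operator; since $\mathcal{M}_{\mathrm{HL}}$ is bounded on $L^2(\Ri^d)$, H\"older's inequality on the finite-measure set $2B$ yields $\int_{2B}|\mathcal{M}_Lf|^p\le |2B|^{1-p/2}\,\|\mathcal{M}_Lf\|_{L^2}^p\le C_B\,\|f\|_{L^2}^p$. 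On $(2B)^C$ I would exploit the extra decay factor $(1+\sqrt t/\rho(x))^{-N}$ in the kernel bound. For $x\in(2B)^C$ and $y\in B$ one has $|x-y|\sim|x-x_0|$ and $\rho(x)\sim(1+|x|)^{-1}$, so optimising the quantity $t^{-d/2}e^{-C|x-x_0|^2/t}(1+\sqrt t/\rho(x))^{-N}$ over $t>0$ (the maximiser sits near $t\sim|x|^2$) gives $\mathcal{M}_Lf(x)\le C_N\,\|f\|_{L^1(B)}\,(1+|x|)^{-(d+2N)}$. Choosing $N$ large enough that $(d+2N)p>d$ makes $\int_{(2B)^C}|\mathcal{M}_Lf|^p$ finite, and since $\|f\|_{L^1(B)}\le|B|^{1/2}\|f\|_{L^2}$, we obtain $\|\mathcal{M}_Lf\|_{L^p}\le C_B\|f\|_{L^2}$. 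In particular $f\in H^p_L(\Ri^d)$, which proves the set inclusion; the constant $C_B$ depends on the support and yields continuity of the inclusion restricted to functions supported in any fixed ball.

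For density I would use the atomic decomposition of Proposition \ref{atom decom}. Every $H^p_L$-atom $a$ is bounded with compact support, hence lies in $L^2_c(\Ri^d)$, and satisfies $\|a\|_{H^p_L}\le C$ by the norm equivalence $\|\cdot\|_{H^p_L}\sim\|\cdot\|_{L,at}$ (a single atom admits the trivial decomposition). Given $f\in H^p_L(\Ri^d)$ with decomposition $f=\sum_j c_ja_j$ converging in $H^p_L(\Ri^d)$, the partial sums $\sum_{j\le N}c_ja_j$ are finite linear combinations of atoms, hence elements of $L^2_c(\Ri^d)$, and they converge to $f$ in $H^p_L(\Ri^d)$. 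Thus $L^2_c(\Ri^d)$ is dense in $H^p_L(\Ri^d)$. Since the kernel estimates of Section \ref{pre}, the atomic decomposition, and the foregoing argument apply verbatim to $L+2$, the same reasoning shows $L^2_c(\Ri^d)$ is dense in $H^p_{L+2}(\Ri^d)$; together with $L^2_c\subset H^p_L\subset H^p_{L+2}$ this completes the proof.

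I expect the main obstacle to be the far-field estimate on $(2B)^C$: for $p\le1$ the bare Gaussian decay $|x-x_0|^{-d}$ is not $L^p$-integrable, so the argument genuinely relies on the Schwartz-type decay factor $(1+\sqrt t/\rho(x))^{-N}$ specific to the Hermite setting and on being free to take $N$ arbitrarily large. Verifying that the supremum over $t$ of the competing factors is controlled by $(1+|x|)^{-(d+2N)}$ — in particular that the small-$t$ regime contributes only super-exponentially small terms — is the one place where care is needed.
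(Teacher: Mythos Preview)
Your argument for the second inclusion and for density coincides with the paper's: the pointwise bound $\mathcal{M}_{L+2}f\le\mathcal{M}_Lf$ and the approximation by finite sums of $H^p_L$-atoms (which lie in $L^2_c$) via Proposition~\ref{atom decom} are exactly what the paper does.

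For the first inclusion you take a genuinely different route. The paper does not estimate $\mathcal{M}_Lf$ at all; instead it invokes the identification $H^p_L=H^p_A\cap L^2$ (Proposition~\ref{HpL = HpA}) together with the $L^2$-boundedness of $A$ (Proposition~\ref{A L2 bounded}), writing in one line
\[
\|Af\|_{L^p(\Ri^d)}\le|\supp f|^{1/q}\,\|Af\|_{L^2(\Ri^d)}\le C\,|\supp f|^{1/q}\,\|f\|_{L^2(\Ri^d)}<\infty .
\]
This is much shorter, but the first inequality is suspect as stated: $Af$ is not supported in $\supp f$, so ordinary H\"older does not yield it, and to make it rigorous one would really need a dyadic-annulus decomposition plus off-support decay of $A$, as in the proof of Lemma~\ref{Hat in HpA}. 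Your direct kernel-based argument avoids this issue and is self-contained, at the price of the far-field computation you flag. One small caveat on that computation: the pointwise bound $\mathcal{M}_Lf(x)\le C_N\|f\|_{L^1(B)}(1+|x|)^{-(d+2N)}$ holds only with a constant that also depends on $x_0$ and $r$; for $x$ close to $x_0$ but with $|x|$ large the sharp bound is of order $|x-x_0|^{-d}\bigl(1+|x-x_0|(1+|x|)\bigr)^{-N}$ rather than $(1+|x|)^{-(d+2N)}$. Since you explicitly allow $C_B$ to depend on the support this is harmless for the set inclusion, and in fact the inclusion cannot be continuous uniformly in the support (take $f_n=|B(0,n)|^{-1/2}\mathds{1}_{B(0,n)}$).
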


\begin{proof}
(First inclusion) Let $f \in L^2_c(\Ri^d)$.
Then by a generalised Holder's inequality (cf.\ \cite[Corollary 2.5]{AF})
\[
\|Af\|_{L^p(\Ri^d)} 
\leq |\supp f|^{1/q} \, \|Af\|_{L^2(\Ri^d)}
\leq C \, |\supp f|^{1/q} \, \|f\|_{L^2(\Ri^d)} 
< \infty,
\]
where $1/q = 1/p - 1/2$ and we used Lemma \ref{A L2 bounded} in the second step.
The first inclusion now follows from Proposition \ref{HpL = HpA}.

(Second inclusion) Let $f \in H^p_L(\Ri^d)$.
Then $\mathcal{M}_L f \in L^p(\Ri^d)$.
However 
\[
\mathcal{M}_{L+2} f(x) = \sup_{t > 0} |e^{-2t} \, T_t f(x)| \leq \sup_{t > 0} |T_t f(x)| = \mathcal{M}_L f
\]
for all $x \in \Ri^d$.
From this we deduce that $f \in H^p_{L+2}(\Ri^d)$ and the second inclusion holds.

(Density) By the atomic characterisation in Proposition \ref{atom decom}, each function in $H^p_L(\Ri^d)$ can be approximated by a finite linear combination $H^p_L$-atoms.
But each such finite linear combination is clearly in $L^2_c(\Ri^d)$.
Hence the claim follows.
\end{proof}

The following lemma is immediate from Lemma \ref{L2c}.

\begin{lemm} \label{increment by 2}
We have
\[
\left( H^p_L(\Ri^d), \|\cdot\|_{H^p_L(\Ri^d)} \right) 
= \left( H^p_{L+2}(\Ri^d), \|\cdot\|_{H^p_{L+2}(\Ri^d)} \right).
\]
\end{lemm}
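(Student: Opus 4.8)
The goal is to prove Lemma \ref{increment by 2}, which asserts that the Hardy spaces $H^p_L(\Ri^d)$ and $H^p_{L+2}(\Ri^d)$ coincide as quasi-normed spaces. The remark preceding the statement says that this is ``immediate from Lemma \ref{L2c}'', so the plan is to extract the equality directly from the three facts packaged in that lemma: the chain of continuous inclusions $L^2_c \subset H^p_L \subset H^p_{L+2}$, together with the density of $L^2_c(\Ri^d)$ in \emph{both} $H^p_L(\Ri^d)$ and $H^p_{L+2}(\Ri^d)$. The strategy is a standard completion/density argument: two completions of the same dense subspace with respect to two quasi-norms coincide as soon as those quasi-norms are equivalent on that dense subspace.

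First I would establish that the two quasi-norms are in fact \emph{equal} on the common dense subspace $L^2_c(\Ri^d)$, not merely comparable. The key identity is the pointwise domination
\[
\mathcal{M}_{L+2} f(x) = \sup_{t>0} |e^{-2t}\,T_t f(x)| \leq \sup_{t>0} |T_t f(x)| = \mathcal{M}_L f(x),
\]
which is exactly the computation carried out in the proof of the second inclusion in Lemma \ref{L2c}; it gives $\|f\|_{H^p_{L+2}} \leq \|f\|_{H^p_L}$ for every $f$. For the reverse inequality on $L^2_c(\Ri^d)$ I would invoke the first inclusion of Lemma \ref{L2c}, which shows any $f \in L^2_c(\Ri^d)$ lies in $H^p_L(\Ri^d)$ with finite quasi-norm controlled by $\|f\|_{L^2}$; combined with the opposite bound from the semigroup comparison, the two quasi-norms are boundedly equivalent on $L^2_c(\Ri^d)$. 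Hence the identity map on $L^2_c(\Ri^d)$ is a quasi-norm equivalence between $(L^2_c, \|\cdot\|_{H^p_L})$ and $(L^2_c, \|\cdot\|_{H^p_{L+2}})$.

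Next I would pass to completions. Since $L^2_c(\Ri^d)$ is dense in $H^p_L(\Ri^d)$ and also dense in $H^p_{L+2}(\Ri^d)$ by Lemma \ref{L2c}, each of these two spaces is, by construction, the completion of the \emph{same} underlying space $L^2_c(\Ri^d)$ with respect to an equivalent quasi-norm. A bounded-below, bounded-above linear bijection between two normed (or quasi-normed) spaces extends uniquely to their completions, and the extension remains a bounded isomorphism with the same constants. Applying this to the identity $\iota\colon L^2_c(\Ri^d) \to L^2_c(\Ri^d)$ yields a topological isomorphism $H^p_L(\Ri^d) \to H^p_{L+2}(\Ri^d)$ that restricts to the identity on the dense subspace, so the two spaces are identified as quasi-normed spaces, which is precisely the assertion.

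The main subtlety, and essentially the only point requiring care, is the reverse quasi-norm bound $\|f\|_{H^p_L} \leq C\,\|f\|_{H^p_{L+2}}$ on the dense subspace: the clean pointwise inequality only runs in one direction, so equivalence of the \emph{completed} spaces cannot be read off from the pointwise maximal-function comparison alone. What rescues the argument is that both completions are taken over the \emph{same} concrete dense set $L^2_c(\Ri^d)$, so it suffices to know that a sequence in $L^2_c(\Ri^d)$ is Cauchy for $\|\cdot\|_{H^p_L}$ if and only if it is Cauchy for $\|\cdot\|_{H^p_{L+2}}$ and the limits agree; this follows once one verifies that the $H^p_L$-quasi-norm is controlled by the $H^p_{L+2}$-quasi-norm on $L^2_c(\Ri^d)$, which one obtains by combining the first inclusion of Lemma \ref{L2c} (giving $L^2$-control of $\|\cdot\|_{H^p_L}$ on compactly supported functions) with a limiting argument, or more directly by noting that the two maximal functions are comparable in $L^p$ on the dense class because the factor $e^{-2t}$ is bounded above and below on the relevant range after the Poisson/heat rescaling. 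Once both directions are in hand, the identification of completions is purely formal.
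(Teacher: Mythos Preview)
There is a genuine gap: you never actually establish the reverse bound $\|f\|_{H^p_L}\le C\,\|f\|_{H^p_{L+2}}$ on the dense class, and both of the arguments you sketch for it fail. The first inclusion of Lemma~\ref{L2c} only gives $\|f\|_{H^p_L}\le C\,|\supp f|^{1/q}\,\|f\|_{L^2}$ with a constant depending on $\supp f$; this is not a bound of $\|f\|_{H^p_L}$ by $\|f\|_{H^p_{L+2}}$, and cannot be upgraded to one by combining it with the trivial inequality $\|f\|_{H^p_{L+2}}\le\|f\|_{H^p_L}$. Your second suggestion, that $e^{-2t}$ is ``bounded above and below on the relevant range'', is simply false: the supremum defining $\mathcal{M}_L$ is over all $t>0$, and $e^{-2t}\to 0$ as $t\to\infty$, so the pointwise maximal functions $\mathcal{M}_L f$ and $\mathcal{M}_{L+2}f$ are \emph{not} comparable in general.

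Without that reverse bound, the density argument collapses: a continuous inclusion $X\hookrightarrow Y$ of complete quasi-normed spaces with a common dense subspace does \emph{not} force $X=Y$ (take $X=\ell^1$, $Y=\ell^2$, with finitely supported sequences dense in both). The paper's one-line proof is admittedly terse, but the ingredient that actually closes the gap is not the maximal-function comparison at all: it is that $L$ and $L+2$ share the \emph{same} auxiliary function $\rho$ up to constants (the potential $|x|^2+2$ has critical radius comparable to that of $|x|^2$), so the $H^p_L$-atoms and the $H^p_{L+2}$-atoms coincide. By the atomic characterisation (Proposition~\ref{atom decom}) applied to each operator, both Hardy spaces equal the same atomic space with equivalent quasi-norms. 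Your write-up should invoke this, rather than trying to squeeze a two-sided estimate out of $e^{-2t}$.
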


\begin{proof}[Proof of Theorem \ref{Riesz bounded}]
Let $j \in \{1, 2, 3, \ldots\}$.
We will show that $R_j^L$ is bounded on $H^p_{L}(\Ri^d) \cap L^2(\Ri^d)$.
The claim then follows by density of $H^p_{L}(\Ri^d) \cap L^2(\Ri^d)$ in $H^p_{L}(\Ri^d)$.

Let $f \in H^p_{L}(\Ri^d) \cap L^2(\Ri^d)$.
Then $f \in H^p_{L+2}(\Ri^d) \cap L^2(\Ri^d)$ by Lemma \ref{increment by 2}.
There exists a $C > 0$ such that
\begin{eqnarray*}
\|R_j^L f\|_{H^p_{L+2}(\Ri^d)} 
& \leq & C \, \|A_{L+2} R_j^L f\|_{L^p(\Ri^d)}
\\
&=& C \, \left\|\left( \int_0^\infty \int_{|x-y|<t} t^{1-d} \, \left|(\D_t e^{-t(L+2)^{1/2}} R_j^L f)(y) \right|^2 \, dy \, dt \right)^{1/2} \right\|_{L^p(\Ri^d)}
\\
&=& C \, \left\|\left( \int_0^\infty \int_{|x-y|<t} t^{1-d} \, \left| \Big(t (\D_j + x_j) e^{-tL^{1/2}} f \Big)(y) \right|^2 \, dy \, dt \right)^{1/2} \right\|_{L^p(\Ri^d)}
\\
&\leq& C \, \|S_L f\|_{L^p(\Ri^d)}
\leq C \, \|f\|_{L^p(\Ri^d)},
\end{eqnarray*}
where we used Proposition \ref{HpL = HpA} in the first step, \cite[Lemma 8]{Jiz} in the third step and Theorem \ref{main theorem} in the last step.
\end{proof}





\newcommand{\etalchar}[1]{$^{#1}$}

\end{document}